\documentclass[12pt,reqno,draft]{amsart}

\usepackage{amsmath,amssymb,amscd,latexsym,amsthm,bm}
\usepackage{times}

\textheight22cm 
\textwidth15cm 
\hoffset-0.7cm

\newtheorem{theorem}{Theorem}[section]

\newtheorem{lemma}[theorem]{Lemma}
\newtheorem{remark}[theorem]{Remark}

\numberwithin{equation}{section}

\begin{document}

\title[On period polynomials of degree $2^m$]{On period polynomials of degree $\bm{2^m}$\\  for finite fields}

\author{Ioulia N. Baoulina}

\address{Department of Mathematics, Moscow State Pedagogical University, Krasnoprudnaya str. 14, Moscow 107140, Russia}
\email{jbaulina@mail.ru}

\date{}

\maketitle

\begin{abstract}
We obtain explicit factorizations of reduced period polynomials of degree $2^m$, $m\ge 4$, for finite fields of characteristic $p\equiv 3\text{\;or\;}5\pmod{8}$. This extends the results of G.~Myerson, who considered the cases $m=1$ and $m=2$, and S.~Gurak, who studied the case $m=3$.
\end{abstract}

\keywords{{\it Keywords}: Period polynomial; cyclotomic period; $f$-nomial period;  reduced period polynomial; Gauss sum; Jacobi sum; factorization.}

\subjclass{2010 Mathematics Subject Classification: 11L05, 11T22, 11T24}

\thispagestyle{empty}

\section{Introduction}
Let $\mathbb F_q$ be a finite field of characteristic~$p$ with $q=p^s$ elements, $\mathbb F_q^*=\mathbb F_q^{}\setminus\{0\}$, and let $\gamma$ be a fixed generator of the cyclic group $\mathbb F_q^*$ . By ${\mathop{\rm Tr}\nolimits}:\mathbb F_q\rightarrow\mathbb F_p$ we denote the trace mapping, that is,  ${\mathop{\rm
Tr}\nolimits}(x)=x+x^p+x^{p^2}+\dots+x^{p^{s-1}}$ for $x\in\mathbb F_q$. Let $e$ and $f$ be positive integers such that $q=ef+1$. Denote by $\mathcal{H}$ the subgroup of $e$-th powers in $\mathbb F_q^*$.  For any positive integer $n$, write $\zeta_n=\exp(2\pi i/n)$.

The cyclotomic (or $f$-nomial) periods of order $e$ for $\mathbb F_q$ with respect to $\gamma$  are defined by
$$
\eta_k=\sum_{x\in\gamma^k\mathcal{H}}\zeta_p^{{\mathop{\rm Tr}\nolimits}(x)}=\sum_{h=0}^{f-1}\zeta_p^{{\mathop{\rm Tr}\nolimits}(\gamma^{eh+k})},\quad k=0,1,\dots,e-1.
$$
The period polynomial of degree $e$ for $\mathbb F_q$ is the polynomial
$$
P_e(X)=\prod_{k=0}^{e-1}(X-\eta_k).
$$
The reduced cyclotomic (or reduced $f$-nomial) periods of order $e$ for $\mathbb F_q$ with respect to $\gamma$ are defined by
$$
\eta_k^*=\sum_{x\in\mathbb F_q}\zeta_p^{{\mathop{\rm Tr}\nolimits}(\gamma^k x^e)}=1+e\eta_k,\quad k=0,1,\dots,e-1,
$$
and the reduced period polynomial of degree $e$ for $\mathbb F_q$ is
$$
P_e^*(X)=\prod_{k=0}^{e-1}(X-\eta_k^*).
$$

The polynomials $P_e(X)$ and $P_e^*(X)$ have integer coefficients and  are independent of the choice of generator~$\gamma$. They are irreducible over the rationals when $s=1,$  but not necessarily irreducible when $s>1$. More precisely, $P_e(X)$ and $P_e^*(X)$ split over the rationals into $\delta=\gcd(e,(q-1)/(p-1))$ factors of degree~$e/\delta$ (not necessarily distinct), and each of these factors is irreducible or a power of an irreducible polynomial. Furthermore, the polynomials $P_e(X)$ and $P_e^*(X)$ are irreducible over the rationals if and only if $\gcd(e,(q-1)/(p-1))=1$. For proofs of these facts, see~\cite{M}.

In the case $s=1$, the period polynomials were determined explicitly by Gauss for ${e\in\{2, 3, 4\}}$  and by many others for certain small values of~$e$. In the general case, Myerson~\cite{M} derived the explicit formulas for $P_e(X)$ and $P_e^*(X)$ when $e\in\{2,3,4\}$, and also found their factorizations into irreducible polynomials over the rationals. Gurak~\cite{G3} obtained similar results for $e\in\{6,8,12,24\}$; see also \cite{G2} for the case $s=2$, $e\in\{6,8,12\}$. Note that if $-1$ is a power of $p$ modulo $e$, then the period polynomials can also be easily obtained. Indeed, if $e>2$ and $e\mid(p^{\ell}+1)$, with $\ell$ chosen minimal, then $2\ell\mid s$, and \cite[Proposition~20]{M} yields
$$
P_e^*(X)=(X+(-1)^{s/2\ell}(e-1)q^{1/2})(X-(-1)^{s/2\ell}q^{1/2})^{e-1}.
$$
Baumert and Mykkeltveit~\cite{BM} found the values of cyclotomic periods in the case when $e>3$ is a prime, $e\equiv 3\pmod{4}$  and $p$ generates the quadratic residues modulo~$e$; see also \cite[Proposition~21]{M}.

It is seen immediately from the definitions that $P_e(X)=e^{-e}P_e^*(eX+1)$, and so it suffices to factorize only $P_e^*(X)$.

The aim of this paper is to obtain the explicit factorizations of the reduced period polynomials of degree $2^m$ with $m\ge 4$ in the case that $p\equiv 3\text{\;or\;}5\pmod{8}$. Notice that in this case $\mathop{\rm ord}_2(q-1)=\mathop{\rm ord}_2(p^s-1)=\mathop{\rm ord}_2 s+2$. Hence, for $p\equiv 3\pmod{8}$,
$$
\gcd(2^m,(q-1)/(p-1))=\begin{cases}
2^m&\text{if $2^{m-1}\mid s$,}\\
2^{m-1}&\text{if $2^{m-2}\parallel s$.}
\end{cases}
$$
Appealing to \cite[Theorem~4]{M}, we conclude that in the case when $2^{m-1}\mid s$, $P_{2^m}^*(X)$ splits over the rationals into linear factors. If $2^{m-2}\parallel s$, then $P_{2^m}^*(X)$ splits into irreducible polynomials of degrees at most 2. Similarly, for $p\equiv 5\pmod{8}$,
$$
\gcd(2^m,(q-1)/(p-1))=\begin{cases}
2^m&\text{if $2^m\mid s$,}\\
2^{m-1}&\text{if $2^{m-1}\parallel s$,}\\
2^{m-2}&\text{if $2^{m-2}\parallel s$.}
\end{cases}
$$
Using \cite[Theorem~4]{M} again, we see that $P_{2^m}^*(X)$ splits over the rationals into linear factors if $2^m\mid s$, splits into linear and quadratic irreducible factors if $2^{m-1}\parallel s$, and splits into linear, quadratic and biquadratic irreducible factors if $2^{m-2}\parallel s$. Our main results are Theorems~\ref{t1} and \ref{t2}, which give the explicit factorizations of $P_{2^m}^*(X)$ in the cases $p\equiv 3\pmod{8}$ and $p\equiv 5\pmod{8}$, respectively. All the evaluations in Sections~\ref{s3} and \ref{s4} are effected in terms of parameters occuring in quadratic partitions of some powers of~$p$.

\section{Preliminary Lemmas}
\label{s2}

In the remainder of the paper, we assume that $p$ is an odd prime. Let $\psi$ be a nontrivial character on $\mathbb F_q$. We extend $\psi$ to all of $\mathbb F_q$ by setting $\psi(0)=0$. The Gauss sum
$G(\psi)$ over $\mathbb F_q$ is defined by
$$
G(\psi)=\sum_{x\in\mathbb F_q}\psi(x)\zeta_p^{{\mathop{\rm
Tr}\nolimits}(x)}.
$$
Gauss sums occur in the Fourier expansion of a reduced cyclotomic period.
\begin{lemma}
\label{l1}
Let $\psi$ be a character of order $e>1$ on $\mathbb F_q$ such that $\psi(\gamma)=\zeta_e$. Then for $k=0,1,\dots, e-1$,
$$
\eta_k^*=\sum_{j=1}^{e-1} G(\psi^j)\zeta_e^{-jk}.
$$
\end{lemma}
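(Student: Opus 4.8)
The plan is to expand the definition $\eta_k^*=\sum_{x\in\mathbb F_q}\zeta_p^{{\mathop{\rm Tr}\nolimits}(\gamma^k x^e)}$ by sorting the terms according to the value $a=x^e$ and counting $e$-th roots with multiplicative characters. First I would record the standard fact that, since $\psi$ has order exactly $e$ (so in particular $e\mid q-1$, and $\psi^0$ is the trivial character with $\psi^0(a)=1$ for $a\in\mathbb F_q^*$), the number of $x\in\mathbb F_q$ with $x^e=a$ equals $\sum_{j=0}^{e-1}\psi^j(a)$ for $a\ne 0$, while for $a=0$ there is the single solution $x=0$. Substituting this into the definition and interchanging the two summations gives
$$
\eta_k^*=1+\sum_{j=0}^{e-1}\sum_{a\in\mathbb F_q^*}\psi^j(a)\,\zeta_p^{{\mathop{\rm Tr}\nolimits}(\gamma^k a)},
$$
where the isolated $1$ is the contribution of $x=0$.

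Next I would handle the $j=0$ term separately. Because $\gamma^k\ne 0$, the map $a\mapsto\gamma^k a$ is a bijection of $\mathbb F_q$, so $\sum_{a\in\mathbb F_q}\zeta_p^{{\mathop{\rm Tr}\nolimits}(\gamma^k a)}=0$ and hence $\sum_{a\in\mathbb F_q^*}\zeta_p^{{\mathop{\rm Tr}\nolimits}(\gamma^k a)}=-1$; this cancels the isolated $1$ and leaves $\eta_k^*=\sum_{j=1}^{e-1}\sum_{a\in\mathbb F_q^*}\psi^j(a)\,\zeta_p^{{\mathop{\rm Tr}\nolimits}(\gamma^k a)}$.

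Finally, for each $j$ with $1\le j\le e-1$ I would substitute $b=\gamma^k a$. Writing $\psi^j(a)=\psi^j(\gamma^{-k})\psi^j(b)$ and using the normalization $\psi(\gamma)=\zeta_e$, one has $\psi^j(\gamma^{-k})=\zeta_e^{-jk}$, so
$$
\sum_{a\in\mathbb F_q^*}\psi^j(a)\,\zeta_p^{{\mathop{\rm Tr}\nolimits}(\gamma^k a)}=\zeta_e^{-jk}\sum_{b\in\mathbb F_q^*}\psi^j(b)\,\zeta_p^{{\mathop{\rm Tr}\nolimits}(b)}=\zeta_e^{-jk}G(\psi^j),
$$
since $\psi^j$ is a nontrivial character for every such $j$ and $\psi^j(0)=0$. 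Summing over $j$ gives the asserted identity. The argument is routine throughout; the only places that need a little attention are the correct bookkeeping of the $a=0$ and $x=0$ terms and the observation that the $j=0$ summand contributes exactly $-1$, which together make the stray constant disappear.
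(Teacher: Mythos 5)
Your proof is correct: the solution-count identity, the cancellation of the $x=0$ contribution against the $j=0$ (trivial character) term, and the substitution $b=\gamma^k a$ with $\psi^j(\gamma^{-k})=\zeta_e^{-jk}$ are all handled properly, including the needed facts that $e\mid q-1$ and that the powers $\psi^j$ exhaust the characters of order dividing $e$. The paper itself gives no argument but simply cites \cite[Theorem~1.1.3 and Equation~(1.1.4)]{BEW}, and your computation is precisely the standard derivation lying behind that citation, so there is nothing further to reconcile.
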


\begin{proof}
It follows from \cite[Theorem~1.1.3 and Equation~(1.1.4)]{BEW}.
\end{proof}

In the next three lemmas, we record some properties of Gauss sums which will be used throughout this paper. By $\rho$ we denote the quadratic character on $\mathbb F_q$  ($\rho(x)=+1, -1, 0$ according as $x$ is a square, a non-square or zero in $\mathbb F_q$).
\begin{lemma}
\label{l2}
Let $\psi$ be a nontrivial character on $\mathbb F_q$
with $\psi\ne\rho$. Then
\begin{itemize}
\item[\textup{(a)}] 
$G(\psi)G(\bar\psi)=\psi(-1)q$;
\item[\textup{(b)}] 
$G(\psi)=G(\psi^p)$;
\item[\textup{(c)}] 
$G(\psi)G(\psi\rho)=\bar\psi(4)G(\psi^2)G(\rho)$.
\end{itemize}
\end{lemma}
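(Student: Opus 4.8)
The three identities are classical properties of Gauss sums, and I would prove each by a short direct argument. For part~(a) the plan is to expand the product and invoke orthogonality of additive characters: writing $G(\psi)G(\bar\psi)=\sum_{x,y\in\mathbb F_q^*}\psi(xy^{-1})\zeta_p^{\mathrm{Tr}(x+y)}$ and substituting $x=ty$ turns this into $\sum_{t\in\mathbb F_q^*}\psi(t)\sum_{y\in\mathbb F_q^*}\zeta_p^{\mathrm{Tr}(y(t+1))}$. The inner sum equals $q-1$ if $t=-1$ and $-1$ otherwise; isolating the term $t=-1$ and using $\sum_{t\in\mathbb F_q^*}\psi(t)=0$ then yields $G(\psi)G(\bar\psi)=\psi(-1)(q-1)+\psi(-1)=\psi(-1)q$.

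For part~(b) I would use that $\psi$ is multiplicative, so $\psi^p(x)=\psi(x^p)$ and hence $G(\psi^p)=\sum_{x\in\mathbb F_q}\psi(x^p)\zeta_p^{\mathrm{Tr}(x)}$. Because the Frobenius map $x\mapsto x^p$ permutes $\mathbb F_q$, I may replace $x$ by $x^{p^{s-1}}$, so that $x^p$ becomes $x$; since the trace is invariant under Frobenius, $\mathrm{Tr}(x^{p^{s-1}})=\mathrm{Tr}(x)$, and the sum collapses back to $G(\psi)$.

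Part~(c), which is the $m=2$ instance of the Hasse--Davenport product relation, is where the genuine work lies, and I would reduce it to an identity between Jacobi sums. Since $\psi$ is nontrivial and $\psi\ne\rho$, both $\psi^2$ and $\psi\rho$ are nontrivial, so the classical factorization $G(\chi_1)G(\chi_2)=J(\chi_1,\chi_2)G(\chi_1\chi_2)$, valid whenever $\chi_1\chi_2$ is nontrivial, with $J(\chi_1,\chi_2)=\sum_{x\in\mathbb F_q}\chi_1(x)\chi_2(1-x)$, gives $G(\psi)^2=J(\psi,\psi)G(\psi^2)$ and $G(\psi)G(\rho)=J(\psi,\rho)G(\psi\rho)$. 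Feeding these into~(c) and cancelling common factors, the assertion becomes equivalent to $\psi(4)\,J(\psi,\psi)=J(\psi,\rho)$. To establish this I would compute $\psi(4)\,J(\psi,\psi)=\sum_{x\in\mathbb F_q}\psi\bigl(4x(1-x)\bigr)=\sum_{x\in\mathbb F_q}\psi\bigl(1-(2x-1)^2\bigr)$, substitute $u=2x-1$ to obtain $\sum_{u\in\mathbb F_q}\psi(1-u^2)$, and then regroup this as a sum over $v=u^2$ in which each $v$ is weighted by the number $1+\rho(v)$ of its square roots; as $\sum_{v\in\mathbb F_q}\psi(1-v)=0$, the only surviving contribution is $\sum_{v\in\mathbb F_q}\rho(v)\psi(1-v)=J(\rho,\psi)=J(\psi,\rho)$.

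The main obstacle is part~(c): one must check that the Gauss--Jacobi relations used remain valid under the stated hypotheses (they do, precisely because $\psi$ is nontrivial with $\psi\ne\rho$), and the crux is the completing-the-square manipulation identifying $\psi(4)\,J(\psi,\psi)$ with $J(\psi,\rho)$. Since this is only a preliminary lemma, an acceptable alternative is to quote all three parts directly from a standard reference such as~\cite{BEW}.
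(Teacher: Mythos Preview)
Your arguments for all three parts are correct. The orthogonality computation in~(a), the Frobenius substitution in~(b), and the reduction of~(c) to the Jacobi-sum identity $\psi(4)\,J(\psi,\psi)=J(\psi,\rho)$ via completing the square are all standard and carried out cleanly; your check that $\psi^2$ and $\psi\rho$ are both nontrivial under the hypothesis $\psi\ne\rho$ is exactly what is needed to justify the Gauss--Jacobi factorizations you invoke.

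The paper, however, does not prove this lemma at all: it simply cites \cite[Theorems~1.1.4(a,\,d) and 11.3.5]{BEW} or \cite[Theorem~5.12(iv,\,v) and Corollary~5.29]{LN}. You anticipated this option in your final sentence, and for a preliminary lemma collecting well-known facts it is the natural choice. Your elementary derivations have the pedagogical advantage of being self-contained, and in particular your treatment of~(c) makes transparent that this case of the Hasse--Davenport product relation reduces to an elementary completing-the-square identity rather than requiring any deeper machinery; but in the context of the paper, where the lemma is purely auxiliary, the citation is entirely appropriate.
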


\begin{proof}
See \cite[Theorems~1.1.4(a, d) and 11.3.5]{BEW} or \cite[Theorem~5.12(iv, v) and Corollary~5.29]{LN}.
\end{proof}

\begin{lemma}
\label{l3}
We have
$$
G(\rho)=
\begin{cases}
(-1)^{s-1}q^{1/2}&\text{if\,\, $p\equiv 1\pmod{4}$,}\\
(-1)^{s-1}i^s q^{1/2}&\text{if\,\, $p\equiv 3\pmod 4$.}
\end{cases}
$$ 
\end{lemma}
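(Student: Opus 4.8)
The identity is the classical evaluation of the quadratic Gauss sum, and the plan is to bootstrap it from the prime field. Since $\bar\rho=\rho$, Lemma~\ref{l2}(a) applied with $\psi=\rho$ gives $G(\rho)^2=\rho(-1)q$; computing $\rho(-1)=(-1)^{(q-1)/2}$ in terms of $p\bmod 4$ and the parity of $s$ shows that the right-hand side of the asserted formula has the correct square in each case. Thus the whole content of the lemma is the determination of the sign (and, when $p\equiv 3\pmod 4$, of the power of $i$), and this is where the real work lies.

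To pin down the sign I would pass to $\mathbb F_p$. Write $\rho_p$ for the quadratic character of $\mathbb F_p$ and $N\colon\mathbb F_q^*\to\mathbb F_p^*$ for the norm map; since $N$ is surjective, $\rho_p\circ N$ is a nontrivial character of $\mathbb F_q^*$ whose order divides $2$, so $\rho=\rho_p\circ N$. The Hasse--Davenport lifting relation for Gauss sums (taken with the additive character built from the trace, exactly as in the definition of $G(\psi)$ above) then gives $-G(\rho)=\bigl(-G(\rho_p)\bigr)^s$, that is, $G(\rho)=(-1)^{s-1}G(\rho_p)^s$.

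It now remains to insert Gauss's classical sign theorem: $G(\rho_p)=p^{1/2}$ if $p\equiv 1\pmod 4$ and $G(\rho_p)=i\,p^{1/2}$ if $p\equiv 3\pmod 4$. Substituting and using $p^{s/2}=q^{1/2}$ yields $G(\rho)=(-1)^{s-1}q^{1/2}$ in the first case and $G(\rho)=(-1)^{s-1}i^s q^{1/2}$ in the second, which is exactly the assertion. The single genuine obstacle is Gauss's sign theorem over $\mathbb F_p$; everything else is formal. Accordingly, in the paper it suffices to cite the combined statement, which is recorded in \cite[Theorem~5.15]{LN} and in \cite{BEW}.
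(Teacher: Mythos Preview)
Your proposal is correct and agrees with the paper, whose entire proof is the citation ``See \cite[Theorem~11.5.4]{BEW} or \cite[Theorem~5.15]{LN}''; your sketch via the Hasse--Davenport relation (the paper's Lemma~\ref{l7}) together with Gauss's sign theorem over $\mathbb F_p$ is precisely the standard derivation underlying that reference. One minor slip: Lemma~\ref{l2} as stated in the paper explicitly excludes $\psi=\rho$, so your opening appeal to Lemma~\ref{l2}(a) with $\psi=\rho$ is formally out of scope---the identity $G(\rho)^2=\rho(-1)q$ is of course true, but in any case it serves only as a sanity check and is not used in your actual sign determination.
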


\begin{proof}
See \cite[Theorem~11.5.4]{BEW} or \cite[Theorem~5.15]{LN}.
\end{proof}

\begin{lemma}
\label{l4}
Let $p\equiv 3\pmod{8}$, $2\mid s$ and $\psi$ be a biquadratic character on $\mathbb F_q$. Then $G(\psi)=-q^{1/2}$.
\end{lemma}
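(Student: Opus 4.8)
The plan is to reduce the computation to the field $\mathbb{F}_{p^2}$ via the Hasse--Davenport relation and then to evaluate the resulting biquadratic Gauss sum over $\mathbb{F}_{p^2}$ explicitly. Since $s$ is even we have $\mathbb{F}_{p^2}\subseteq\mathbb{F}_q$, and since $4\mid p^2-1$ every biquadratic character $\psi$ of $\mathbb{F}_q$ is the lift $\chi\circ N_{\mathbb{F}_q/\mathbb{F}_{p^2}}$ of a biquadratic character $\chi$ of $\mathbb{F}_{p^2}$. The Hasse--Davenport relation for lifted characters then gives $G(\psi)=(-1)^{s/2-1}\bigl(G_{\mathbb{F}_{p^2}}(\chi)\bigr)^{s/2}$, where $G_{\mathbb{F}_{p^2}}$ denotes the Gauss sum taken over $\mathbb{F}_{p^2}$. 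Granting the claim $G_{\mathbb{F}_{p^2}}(\chi)=-p$, this equals $(-1)^{s/2-1}(-p)^{s/2}=(-1)^{s-1}p^{s/2}=-q^{1/2}$, as $s$ is even; this is the assertion (and the case $s=2$ is just the claim itself).

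Thus everything reduces to proving $G_{\mathbb{F}_{p^2}}(\chi)=-p$. Here $p\equiv3\pmod4$, so $\chi^p=\chi^3=\bar\chi$; by Lemma~\ref{l2}(b), $G(\chi)=G(\bar\chi)$, and Lemma~\ref{l2}(a) then gives $G(\chi)^2=\chi(-1)p^2$. As $8\mid p^2-1$, the element $-1$ is a fourth power in $\mathbb{F}_{p^2}^*$, so $\chi(-1)=1$ and $G(\chi)=\pm p$; only the sign remains. To fix it, I would compute the reduced period $\eta_0^*$ over $\mathbb{F}_{p^2}$ in two ways. On one hand, Lemma~\ref{l1} (with $e=4$, $k=0$, and $\chi^2=\rho$, $\chi^3=\bar\chi$) together with Lemma~\ref{l3} (with $s=2$, $p\equiv3\pmod4$, giving $G(\rho)=p$) yields $\eta_0^*=G(\chi)+G(\rho)+G(\bar\chi)=2G(\chi)+p$. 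On the other hand, $\eta_0^*=1+4\sum_{y\in\mathcal{H}}\zeta_p^{\mathop{\rm Tr}\nolimits(y)}$ with $\mathcal{H}=(\mathbb{F}_{p^2}^*)^4$. Because $p\equiv3\pmod8$, the integer $M=(p+1)/4$ is odd, and $\mathbb{F}_p^*\subseteq\mathcal{H}$ with $[\mathcal{H}:\mathbb{F}_p^*]=M$. Splitting $\mathcal{H}$ into cosets of $\mathbb{F}_p^*$ and using $\mathop{\rm Tr}\nolimits(ca)=a\mathop{\rm Tr}\nolimits(c)$ for $a\in\mathbb{F}_p$, each coset $c\mathbb{F}_p^*$ contributes $p-1$ to the inner sum if $\mathop{\rm Tr}\nolimits(c)=0$ and $-1$ otherwise. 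But $\mathop{\rm Tr}\nolimits(c)=0$ with $c\ne0$ forces $c^{p-1}=-1$, whence $c^{(p^2-1)/4}=(c^{p-1})^M=(-1)^M=-1\ne1$, so no element of $\mathcal{H}$ can have trace $0$. Hence each of the $M$ cosets contributes $-1$, so $\sum_{y\in\mathcal{H}}\zeta_p^{\mathop{\rm Tr}\nolimits(y)}=-M$ and $\eta_0^*=1-4M=-p$. Comparing the two expressions gives $2G(\chi)+p=-p$, i.e.\ $G(\chi)=-p$.

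The crux --- and the only genuinely delicate point --- is pinning down the sign in $G(\chi)=\pm p$: the identities of Lemma~\ref{l2} determine the Gauss sum only up to sign, and running the two-way evaluation of $\eta_0^*$ directly over $\mathbb{F}_q$ turns out to be circular, since one cannot count the trace-zero cosets of $\mathbb{F}_p^*$ inside $(\mathbb{F}_q^*)^4$ without bringing back the very Gauss sums in question. Descending to $\mathbb{F}_{p^2}$ via Hasse--Davenport is precisely what breaks this circle: over $\mathbb{F}_{p^2}$ the nonzero trace-zero elements form a single coset of $\mathbb{F}_p^*$, and the elementary congruence $c^{(p^2-1)/4}=(-1)^M=-1$ decides at once that it misses $\mathcal{H}$. (Alternatively, the lemma is a special case of the classical evaluation of ``pure'' Gauss sums in the case when $-1$ is a power of $p$ modulo the order of the character --- the situation underlying \cite[Proposition~20]{M} quoted in the introduction --- and one could simply cite that; the self-contained argument above is offered for completeness.)
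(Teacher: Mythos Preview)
Your proof is correct. The paper's own proof is a one-line citation of \cite[Theorem~11.6.3]{BEW}, the standard evaluation of Gauss sums in the semi-primitive case (where $-1$ is a power of $p$ modulo the order of the character). Your argument takes a genuinely different, self-contained route: you descend to $\mathbb{F}_{p^2}$ via the Hasse--Davenport lifting relation (Lemma~\ref{l7}), use Lemma~\ref{l2} to reduce the question to a sign, and then pin down that sign by an elementary coset count showing that no nonzero trace-zero element of $\mathbb{F}_{p^2}$ lies in the subgroup of fourth powers. What the citation buys is brevity and an appeal to a well-known result; what your approach buys is that it uses only the lemmas already assembled in Section~\ref{s2}, making the exposition internally self-contained at this point --- a trade-off you yourself note in your final paragraph. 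One small remark: when you invoke Lemma~\ref{l1} over $\mathbb{F}_{p^2}$, you are tacitly using that the identity holds over any finite field and that, for $k=0$, the expression $\sum_{j=1}^{e-1}G(\psi^j)$ is independent of which character of order $e$ is chosen; both points are immediate, but worth a word if you polish this for inclusion.
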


\begin{proof}
It is a special case of  \cite[Theorem~11.6.3]{BEW}.
\end{proof}

Let $\psi$ be a nontrivial character on $\mathbb F_q$. The Jacobi sum $J(\psi)$ over $\mathbb F_q$ is defined by
$$
J(\psi)=\sum_{x\in\mathbb F_q}\psi(x)\psi(1-x).
$$
The following lemma gives a relationship between Gauss sums and Jacobi sums.

\begin{lemma}
\label{l5}
Let $\psi$ be a nontrivial character on $\mathbb F_q$
with $\psi\ne\rho$. Then
$$
G(\psi)^2=G(\psi^2)J(\psi).
$$
\end{lemma}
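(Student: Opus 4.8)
The statement to prove is Lemma~\ref{l5}, the classical relation $G(\psi)^2=G(\psi^2)J(\psi)$ for a nontrivial character $\psi\ne\rho$; it can of course simply be cited from \cite[Theorem~1.1.4(c)]{BEW} or \cite[Theorem~5.21]{LN}, but here is how I would argue it directly. The plan is to expand $G(\psi)^2$ as a double sum over $\mathbb F_q\times\mathbb F_q$ and reorganize it according to the value of $x+y$. First note that $\psi^2$ is itself nontrivial: if $\psi^2$ were trivial, then $\psi$ would have order dividing $2$, forcing $\psi$ to be trivial or equal to $\rho$, both of which are excluded by hypothesis. This observation is what makes the argument work.

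Next I would write
$$
G(\psi)^2=\sum_{x\in\mathbb F_q}\sum_{y\in\mathbb F_q}\psi(x)\psi(y)\zeta_p^{{\mathop{\rm Tr}\nolimits}(x+y)}
$$
and split the sum into the part with $x+y=0$ and the part with $x+y=t\ne 0$. For the first part, writing $y=-x$ gives $\psi(x)\psi(-x)=\psi(-1)\psi^2(x)$, so that contribution equals $\psi(-1)\sum_{x\in\mathbb F_q}\psi^2(x)=0$ by orthogonality, precisely because $\psi^2$ is nontrivial. For the second part, for each fixed $t\ne0$ substitute $x=ts$, $y=t(1-s)$ with $s$ ranging over $\mathbb F_q$; then $\psi(x)\psi(y)=\psi^2(t)\,\psi(s)\psi(1-s)$ and ${\mathop{\rm Tr}\nolimits}(x+y)={\mathop{\rm Tr}\nolimits}(t)$. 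Summing first over $s$ and then over $t\ne0$ yields
$$
G(\psi)^2=\Bigl(\sum_{t\in\mathbb F_q}\psi^2(t)\zeta_p^{{\mathop{\rm Tr}\nolimits}(t)}\Bigr)\Bigl(\sum_{s\in\mathbb F_q}\psi(s)\psi(1-s)\Bigr)=G(\psi^2)J(\psi),
$$
where the terms $s=0$ and $s=1$ (and $t=0$) drop out automatically since $\psi$ and $\psi^2$ vanish at $0$.

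I do not expect any real obstacle here: the only point requiring care is the vanishing of the $x+y=0$ contribution, which rests on the preliminary remark that $\psi\ne\rho$ together with nontriviality forces $\psi^2$ to be nontrivial, and the legitimacy of the change of variables $x=ts$, $y=t(1-s)$, which is a bijection of $\mathbb F_q$ onto itself for each fixed $t\ne0$. Everything else is a routine rearrangement of finite sums.
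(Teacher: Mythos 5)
Your argument is correct. The paper itself does not prove Lemma~\ref{l5}; it simply cites \cite[Theorem~2.1.3(a)]{BEW} and \cite[Theorem~5.21]{LN} (note that your pointer to Theorem~1.1.4(c) of \cite{BEW} is not the right item in that book, though the \cite{LN} reference matches). What you supply is the standard self-contained derivation that those references give: expand $G(\psi)^2$ as a double sum, kill the diagonal $x+y=0$ using the orthogonality of the nontrivial character $\psi^2$, and parametrize the fibre $x+y=t$ for $t\ne 0$ by $x=ts$, $y=t(1-s)$ so that the sum factors as $G(\psi^2)J(\psi)$. You correctly isolate the two points where the hypotheses enter: the nontriviality of $\psi^2$ (equivalent to $\psi\ne 1$ and $\psi\ne\rho$), which is exactly what makes the diagonal contribution vanish, and the convention $\psi(0)=0$, which lets the degenerate terms $s\in\{0,1\}$ and $t=0$ be absorbed harmlessly. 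The citation route keeps the paper short; your route makes the lemma self-contained at the cost of a few lines, and either is acceptable here.
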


\begin{proof}
See \cite[Theorem~2.1.3(a)]{BEW} or \cite[Theorem~5.21]{LN}.
\end{proof}

Let $\psi$ be a character on $\mathbb F_q$. The lift $\psi'$ of
the character $\psi$ from $\mathbb F_{q^{\vphantom{r}}}$ to the
extension field $\mathbb F_{q^r}$ is given by
$$
\psi'(x)=\psi({\mathop{\rm N}}_{\mathbb F_{q^r}/\mathbb
F_{q^{\vphantom{r}}}}(x)), \qquad x\in\mathbb F_{q^r},
$$
where ${\mathop{\rm N}}_{\mathbb F_{q^r}/\mathbb
F_{q^{\vphantom{r}}}}(x)=x\cdot x^q\cdot x^{q^2}\cdots
x^{q^{r-1}}=x^{(q^r-1)/(q-1)}$ is the norm of $x$ from
$\mathbb F_{q^r}$ to $\mathbb F_{q^{\vphantom{r}}}$.

\begin{lemma}
\label{l6} 
Let $\psi$ be a character on $\mathbb
F_{q^{\vphantom{r}}}$ and let $\psi'$ denote the lift of $\psi$
from $\mathbb F_{q^{\vphantom{r}}}$ to $\mathbb F_{q^r}$. Then
\begin{itemize}
\item[\textup{(a)}] 
$\psi'$ is a character on $\mathbb F_{q^r}$;
\item[\textup{(b)}]
a character $\lambda$ on $\mathbb F_{q^r}$ equals the lift $\psi'$ of some character $\psi$ on $\mathbb F_q$ if and only if the order of $\lambda$ divides $q-1$;
\item[\textup{(c)}]
$\psi'$ and $\psi$ have the same order.
\end{itemize}
\end{lemma}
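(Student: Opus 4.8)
The plan is to deduce all three statements from two elementary properties of the norm map $N:=N_{\mathbb F_{q^r}/\mathbb F_q}$: that it is a group homomorphism $\mathbb F_{q^r}^*\to\mathbb F_q^*$, and that it is surjective. Multiplicativity is clear from the formula $N(x)=x^{(q^r-1)/(q-1)}$ recorded just before the lemma. The same formula gives surjectivity: on the cyclic group $\mathbb F_{q^r}^*$ of order $q^r-1$, the image of the $\bigl((q^r-1)/(q-1)\bigr)$-th power map is the unique subgroup of order $q-1$, and since this image is contained in $\mathbb F_q^*$, which itself has order $q-1$, the two coincide. Note also that $N(0)=0$, so $\psi'(0)=\psi(0)=0$, consistently with the convention of extending a character by~$0$. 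Part~(a) is then immediate: $\psi'=\psi\circ N$ is a composition of homomorphisms, hence a homomorphism $\mathbb F_{q^r}^*\to\mathbb C^*$, that is, a character on $\mathbb F_{q^r}$.

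For part~(c) I would observe that $(\psi')^{k}=\psi^{k}\circ N$ for every integer~$k$. Since $N$ is surjective, $\psi^{k}\circ N$ is the trivial character on $\mathbb F_{q^r}^*$ if and only if $\psi^{k}$ is the trivial character on $\mathbb F_q^*$; hence $\psi'$ and $\psi$ have the same order.

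For part~(b), the ``only if'' direction follows from~(c), because $\psi$ is a character on a group of order $q-1$ and so has order dividing $q-1$. For the ``if'' direction I would argue by counting characters. The characters of $\mathbb F_{q^r}^*$ of the form $\psi\circ N$ are precisely those that are trivial on $\ker N$; since $N$ induces an isomorphism $\mathbb F_{q^r}^*/\ker N\cong\mathbb F_q^*$, these constitute a subgroup of the dual group $\widehat{\mathbb F_{q^r}^*}$ of order $q-1$. But $\widehat{\mathbb F_{q^r}^*}$ is cyclic of order $q^r-1$, hence has a \emph{unique} subgroup of order $q-1$, namely the set of characters whose order divides $q-1$. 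Comparing the two descriptions of this subgroup shows that a character $\lambda$ on $\mathbb F_{q^r}$ equals $\psi'$ for some character $\psi$ on $\mathbb F_q$ exactly when the order of $\lambda$ divides $q-1$.

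I do not expect a genuine obstacle here; the only point demanding a little care is the identification of ``characters of $\mathbb F_{q^r}^*$ trivial on $\ker N$'' with ``characters lifted from $\mathbb F_q$'', which is precisely where the surjectivity of $N$ enters. (All three parts are also implicit in the treatment of lifted characters in \cite{BEW} and \cite{LN}, and one may alternatively simply cite those.)
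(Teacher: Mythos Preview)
Your argument is correct. The paper itself does not give a proof at all; it simply cites \cite[Theorem~11.4.4(a,\,c,\,e)]{BEW}. You have instead supplied a short self-contained proof built on the surjectivity of the norm $N_{\mathbb F_{q^r}/\mathbb F_q}$ (via the power-map description on cyclic groups) and a counting argument in the dual group for part~(b). This is exactly the content underlying the cited theorem, and your last parenthetical remark anticipates the paper's approach. The only trade-off is length versus self-containment: the paper saves space by pointing to a standard reference, while your version makes the lemma independent of \cite{BEW}.
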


\begin{proof}
See \cite[Theorem~11.4.4(a, c, e)]{BEW}.
\end{proof}

The following lemma, which is due to Davenport and Hasse, connects a Gauss sum and its lift. 
\begin{lemma}
\label{l7} 
Let $\psi$ be a nontrivial character on $\mathbb F_q$
and let $\psi'$ denote the lift of $\psi$ from $\mathbb F_{q^{}}$
to $\mathbb F_{q^r}$. Then
$$
G(\psi')=(-1)^{r-1}G(\psi)^r.
$$
\end{lemma}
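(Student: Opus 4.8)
The plan is to run the classical generating-function argument of Davenport and Hasse, which delivers the relation for every $r$ at once. Write $\lambda$ for the additive character $x\mapsto\zeta_p^{\mathrm{Tr}(x)}$ of $\mathbb F_q$, so that $G(\psi)=\sum_{x\in\mathbb F_q}\psi(x)\lambda(x)$, and $\lambda_r$ for the additive character $x\mapsto\zeta_p^{\mathrm{Tr}_{\mathbb F_{q^r}/\mathbb F_p}(x)}$ of $\mathbb F_{q^r}$; by transitivity of the trace, $\lambda_r=\lambda\circ\mathrm{Tr}_{\mathbb F_{q^r}/\mathbb F_q}$. For a monic $g=X^n+b_{n-1}X^{n-1}+\dots+b_0\in\mathbb F_q[X]$ with $b_0\ne0$ set $\theta(g)=\psi\bigl((-1)^n b_0\bigr)\lambda(-b_{n-1})$ and $\theta(1)=1$; since $-b_{n-1}$ is the sum and $(-1)^n b_0$ the product of the roots of $g$, the map $\theta$ is completely multiplicative on the monoid of monic polynomials in $\mathbb F_q[X]$ not divisible by $X$.

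The first step is the identity $\sum_{g}\theta(g)t^{\deg g}=1+G(\psi)t$ in $\mathbb C[[t]]$, the sum running over all monic $g$ with $g(0)\ne0$. For $n\ge2$ the coefficients $b_1,\dots,b_{n-2}$ are unconstrained while $\sum_{b_{n-1}\in\mathbb F_q}\lambda(-b_{n-1})=0$, so the $t^n$-coefficient vanishes; the $t^1$-coefficient is $\sum_{c\in\mathbb F_q^*}\psi(c)\lambda(c)=G(\psi)$ and the $t^0$-coefficient is $1$. On the other hand, unique factorization together with complete multiplicativity of $\theta$ gives the Euler product $\sum_g\theta(g)t^{\deg g}=\prod_{P}\bigl(1-\theta(P)t^{\deg P}\bigr)^{-1}$, the product over monic irreducibles $P\ne X$. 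Taking logarithms of the resulting identity $\prod_P\bigl(1-\theta(P)t^{\deg P}\bigr)^{-1}=1+G(\psi)t$, using $\log(1+G(\psi)t)=\sum_{m\ge1}(-1)^{m-1}G(\psi)^m t^m/m$, and comparing coefficients of $t^r$, I would obtain
$$
\sum_{d\mid r}\ d\!\!\sum_{\substack{P\text{ monic irred.}\\ \deg P=d}}\!\!\theta(P)^{r/d}=(-1)^{r-1}G(\psi)^r .
$$

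The remaining step is to recognize the left side as $G(\psi')$. If $P$ is a monic irreducible of degree $d$ (so $d\mid r$, otherwise the term is absent) with a root $\alpha\in\mathbb F_{q^d}\subseteq\mathbb F_{q^r}$, then $-b_{n-1}=\mathrm{Tr}_{\mathbb F_{q^d}/\mathbb F_q}(\alpha)$ and $(-1)^n b_0=\mathrm{N}_{\mathbb F_{q^d}/\mathbb F_q}(\alpha)$, so $\theta(P)=\psi\bigl(\mathrm{N}_{\mathbb F_{q^d}/\mathbb F_q}(\alpha)\bigr)\lambda\bigl(\mathrm{Tr}_{\mathbb F_{q^d}/\mathbb F_q}(\alpha)\bigr)$. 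Transitivity of norm and trace along $\mathbb F_q\subseteq\mathbb F_{q^d}\subseteq\mathbb F_{q^r}$, together with $\mathrm{N}_{\mathbb F_{q^r}/\mathbb F_{q^d}}(\alpha)=\alpha^{r/d}$ and $\mathrm{Tr}_{\mathbb F_{q^r}/\mathbb F_{q^d}}(\alpha)=(r/d)\alpha$ for $\alpha\in\mathbb F_{q^d}$, then give $\theta(P)^{r/d}=\psi\bigl(\mathrm{N}_{\mathbb F_{q^r}/\mathbb F_q}(\alpha)\bigr)\lambda_r(\alpha)=\psi'(\alpha)\lambda_r(\alpha)$, where $\psi'$ is the lift. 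Since $\psi'\lambda_r$ is invariant under $\alpha\mapsto\alpha^q$, it takes the same value at each of the $d$ roots of $P$, whence $d\,\theta(P)^{r/d}=\sum_{P(\alpha)=0}\psi'(\alpha)\lambda_r(\alpha)$; as $P$ runs over all monic irreducibles $\ne X$ of degree dividing $r$, the roots $\alpha$ run through $\mathbb F_{q^r}^*$ exactly once. Hence the left side equals $\sum_{\alpha\in\mathbb F_{q^r}^*}\psi'(\alpha)\lambda_r(\alpha)=G(\psi')$ (the term $\alpha=0$ contributes $0$, and $\psi'$ is nontrivial by Lemma~\ref{l6}), and comparison with the displayed identity gives $G(\psi')=(-1)^{r-1}G(\psi)^r$.

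The main obstacle is the bookkeeping in the logarithm step: extracting the correct sign in $\sum_{d\mid r}d\sum_{\deg P=d}\theta(P)^{r/d}=(-1)^{r-1}G(\psi)^r$, and then verifying that the norm and trace identities along the tower $\mathbb F_q\subseteq\mathbb F_{q^d}\subseteq\mathbb F_{q^r}$ are exactly what converts $\theta(P)^{r/d}$ into $\psi'(\alpha)\lambda_r(\alpha)$. One could instead phrase the same computation through the associated $L$-function of a multiplicative character twisted by an additive one on the affine line, but that is the same calculation in different language. The remaining ingredients — the two evaluations of the generating series, the vanishing of the higher-degree coefficients, and the partition of $\mathbb F_{q^r}^*$ according to minimal polynomials over $\mathbb F_q$ — are routine.
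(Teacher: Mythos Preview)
Your argument is correct: this is precisely the classical Davenport--Hasse generating-function proof, and all the bookkeeping (the vanishing of the $t^n$-coefficient for $n\ge2$, the logarithm step, and the identification of $\theta(P)^{r/d}$ with $\psi'(\alpha)\lambda_r(\alpha)$ via transitivity of norm and trace) is handled accurately. The paper itself does not supply a proof but simply cites \cite[Theorem~11.5.2]{BEW} and \cite[Theorem~5.14]{LN}; the proof you have written is essentially the one found in those references.
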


\begin{proof}
See \cite[Theorem~11.5.2]{BEW} or \cite[Theorem~5.14]{LN}.
\end{proof}

Now we turn to the case $p\equiv 3\text{\;or\;}5\pmod{8}$. We recall a few facts which were established in our earlier paper~\cite{B2} in more general settings.

\begin{lemma}
\label{l8} 
Let $p\equiv 3\text{\;or\;}5\pmod{8}$ and
$\psi$ be a character of order~$2^r$ on $\mathbb F_q$, where 
$$
r\ge\begin{cases}
4&\text{if $p\equiv 3\pmod{8}$,}\\
3&\text{if $p\equiv 5\pmod{8}$.}
\end{cases}
$$ 
Then $G(\psi)=G(\psi\rho)$.
\end{lemma}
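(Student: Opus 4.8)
The plan is to reduce the identity $G(\psi)=G(\psi\rho)$ to a statement about a ratio of Gauss sums that can be pinned down by combining the multiplicative relations of Lemma~\ref{l2} with the explicit evaluations in Lemmas~\ref{l3} and~\ref{l4}. First I would apply Lemma~\ref{l2}(c), which gives $G(\psi)G(\psi\rho)=\bar\psi(4)G(\psi^2)G(\rho)$, so that controlling $G(\psi)/G(\psi\rho)$ is equivalent to controlling the product $G(\psi)G(\psi\rho)$ together with the square $G(\psi)^2$. Writing $\chi=\psi^2$, a character of order $2^{r-1}$, one sees that everything hinges on understanding the behaviour of Gauss sums of $2$-power order characters over $\mathbb F_q$ when $p\equiv 3$ or $5\pmod 8$.

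The key structural input is that $q-1$ is divisible by a high power of $2$ only through the factor coming from $s$: since $\mathop{\rm ord}_2(q-1)=\mathop{\rm ord}_2 s+2$ in both residue classes, a character of order $2^r$ with $r\ge 4$ (resp. $r\ge 3$) forces $2^{r-2}\mid s$ (resp. $2^{r-1}\mid s$), and in particular $s$ is even. I would therefore pass to the subfield: by Lemma~\ref{l6}(b) such a $\psi$ is the lift of a character $\psi_0$ of the same order from a proper subfield $\mathbb F_{q_0}$, and Lemma~\ref{l7} (Davenport--Hasse) converts the desired identity $G(\psi)=G(\psi\rho)$ into the corresponding identity downstairs, namely $G(\psi_0)^r=G(\psi_0\rho_0)^r$ up to the explicit sign $(-1)^{r-1}$ on each side, which cancels. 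Iterating this descent, one arrives at the minimal field over which a character of order $2^r$ lives, where $\rho$ becomes a biquadratic-type character controlled by Lemma~\ref{l4} (for $p\equiv 3\pmod 8$) or directly by Lemma~\ref{l3} together with the order-$8$ Gauss sum evaluations (for $p\equiv 5\pmod 8$); at that base level the equality $G(\psi)=G(\psi\rho)$ can be checked by a direct computation using Lemma~\ref{l2}(a,c) and the known value of $G(\rho)$.

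Concretely, the cleanest route is probably to use Lemma~\ref{l2}(c) to write $G(\psi\rho)=\bar\psi(4)G(\psi^2)G(\rho)/G(\psi)$ and then show the right-hand side equals $G(\psi)$, i.e. $G(\psi)^2=\bar\psi(4)G(\psi^2)G(\rho)$. By Lemma~\ref{l5}, $G(\psi)^2=G(\psi^2)J(\psi)$, so the claim becomes $J(\psi)=\bar\psi(4)G(\rho)$; squaring, and using $|J(\psi)|^2=q=|G(\rho)|^2$, reduces matters to a statement about a root of unity, which the Davenport--Hasse descent resolves by tracking the argument of $J(\psi)$ through the norm map. The main obstacle I anticipate is bookkeeping the various sign and fourth-root-of-unity factors consistently across the descent — in particular making sure that the factor $\bar\psi(4)$, the sign $(-1)^{s-1}i^s$ from Lemma~\ref{l3} when $p\equiv 3\pmod 4$, and the Davenport--Hasse sign $(-1)^{r-1}$ all combine to the trivial factor under the hypothesis $2^{r-2}\mid s$ (resp. $2^{r-1}\mid s$); the inequality on $r$ is exactly what is needed to kill these ambiguities, and checking that is where the real content lies. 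Since all of this was carried out in the author's earlier paper~\cite{B2} in greater generality, the proof here should simply cite that reference after indicating the specialization.

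\begin{proof}
This is a special case of results established in~\cite{B2}; we indicate the reduction. Since $p\equiv 3\text{\;or\;}5\pmod 8$, we have $\mathop{\rm ord}_2(q-1)=\mathop{\rm ord}_2 s+2$, so a character $\psi$ of order $2^r$ on $\mathbb F_q$ with $r$ as in the hypothesis forces $2^{r-2}\mid s$ when $p\equiv 3\pmod 8$ and $2^{r-1}\mid s$ when $p\equiv 5\pmod 8$; in either case $s$ is even. By Lemma~\ref{l2}(c),
$$
G(\psi)G(\psi\rho)=\bar\psi(4)G(\psi^2)G(\rho),
$$
and by Lemma~\ref{l5}, $G(\psi)^2=G(\psi^2)J(\psi)$, so the asserted identity $G(\psi)=G(\psi\rho)$ is equivalent to $J(\psi)=\bar\psi(4)G(\rho)$. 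Both sides have absolute value $q^{1/2}$, so it suffices to match arguments. Using Lemma~\ref{l6}(b) write $\psi$ as the lift of a character $\psi_0$ of the same order $2^r$ from the minimal subfield $\mathbb F_{q_0}$ containing a $2^r$-th root of unity; Lemma~\ref{l7} then reduces the computation of $G(\psi)$, $G(\psi\rho)$, and hence of $J(\psi)$ and $\bar\psi(4)$, to the corresponding quantities over $\mathbb F_{q_0}$, the Davenport--Hasse signs on the two sides cancelling. Over $\mathbb F_{q_0}$ the quadratic character is evaluated by Lemma~\ref{l3} (and, when $p\equiv 3\pmod 8$ and the relevant character is biquadratic, by Lemma~\ref{l4}), and a direct check using Lemma~\ref{l2}(a,c) yields $J(\psi_0)=\bar\psi_0(4)G(\rho_0)$; the divisibility $2^{r-2}\mid s$ (resp. $2^{r-1}\mid s$) is exactly what guarantees that the accumulated fourth-root-of-unity and sign factors become trivial. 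This gives $G(\psi)=G(\psi\rho)$; full details are in~\cite{B2}.
\end{proof}
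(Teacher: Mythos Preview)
The paper's proof is a bare citation to \cite[Lemma~2.13]{B2}, so your final deferral to~\cite{B2} matches. The sketch you wrap around it, however, is both more complicated than necessary and contains errors.

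First, a concrete slip: for $p\equiv 5\pmod 8$ the existence of a character of order~$2^r$ forces only $2^{r-2}\mid s$, not $2^{r-1}\mid s$ as you write (take $p=5$, $s=2$, $r=3$). The formula $\mathop{\rm ord}_2(q-1)=\mathop{\rm ord}_2 s+2$ gives the correct divisibility in both residue classes.

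Second, and more importantly, the whole Jacobi-sum/Davenport--Hasse descent is a detour. Since $\rho=\psi^{2^{r-1}}$, the claim $G(\psi)=G(\psi\rho)$ is just $G(\psi)=G(\psi^{1+2^{r-1}})$, and by Lemma~\ref{l2}(b) this follows once $1+2^{r-1}$ is a power of~$p$ in $(\mathbb Z/2^r\mathbb Z)^*$. That is a purely group-theoretic fact: under the stated bound on~$r$, $p$ has order~$2^{r-2}$ modulo~$2^r$, and $1+2^{r-1}$ is the unique element of order~$2$ in $\langle p\rangle$ (indeed $p^{2^{r-3}}\equiv 1+2^{r-1}\pmod{2^r}$). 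The lower bound on~$r$ is exactly what makes this work; for instance at $r=3$ with $p\equiv 3\pmod 8$ one has $\langle p\rangle=\{1,3\}\not\ni 5=1+2^{r-1}$, which is why that case is excluded.

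By contrast, your route reduces to proving $J(\psi)=\bar\psi(4)G(\rho)$ and then asserts a ``direct check'' over the minimal subfield without performing it; but over that base field you face the same identity with no new leverage, so the argument as written is circular. If you want to keep a sketch before citing~\cite{B2}, replace it with the one-line observation above via Lemma~\ref{l2}(b).
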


\begin{proof}
See \cite[Lemma 2.13]{B2}.
\end{proof}

\begin{lemma}
\label{l9} 
Let $p\equiv 3\text{\;or\;}5\pmod{8}$, $r\ge 3$, and
$\psi$ be a character of order~$2^r$ on $\mathbb F_q$. Then
$$
\psi(4)=
\begin{cases}
1 & \text{if $p\equiv 3\pmod{8}$,}\\
(-1)^{s/2^{r-2}}&\text{if $p\equiv 5\pmod{8}$.}
\end{cases}
$$
\end{lemma}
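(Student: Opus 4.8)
The plan is to reduce the statement to a computation with the discrete logarithm of $2$ and a single $2$-adic valuation. Fix a generator $\gamma$ of $\mathbb F_q^*$; since $\psi$ has order $2^r$, we may write $\psi(\gamma)=\zeta_{2^r}^{a}$ with $a$ odd. Let $\ell$ be the index of $2$ (so $2=\gamma^{\ell}$, $0\le\ell<q-1$). Then $\psi(4)=\psi(2)^2=\zeta_{2^r}^{2a\ell}=\zeta_{2^{r-1}}^{a\ell}$. Setting $j=\mathop{\rm ord}_2(\ell)$ and writing $\ell=2^{j}m$ with $m$ odd, this number equals $1$ when $j\ge r-1$, equals $-1$ when $j=r-2$, and is a primitive $2^{r-1-j}$-th root of unity (hence $\ne\pm1$) when $j\le r-3$. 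Thus it suffices to compute $j=\mathop{\rm ord}_2(\ell)$ and to verify that $j\ge r-2$ always holds.

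To compute $j$, I would use that $2\in\mathbb F_p^*$ and that $\mathbb F_p^*$ is the unique subgroup of $\mathbb F_q^*$ of order $p-1$, namely $\langle\gamma^{(q-1)/(p-1)}\rangle$. Hence $\ell=\frac{q-1}{p-1}\,j_0$, where $j_0$ is the index of $2$ in $\mathbb F_p^*$ with respect to the generator $\gamma^{(q-1)/(p-1)}$; since $1\le j_0\le p-2$, this identity already holds between integers, with no reduction modulo $q-1$. Now $j_0$ is odd, because $2$ is a quadratic non-residue modulo $p$: by the second supplement to the law of quadratic reciprocity, $\bigl(\tfrac2p\bigr)=(-1)^{(p^2-1)/8}=-1$ for $p\equiv3,5\pmod8$. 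Therefore $j=\mathop{\rm ord}_2(\ell)=\mathop{\rm ord}_2\!\bigl(\tfrac{q-1}{p-1}\bigr)=\mathop{\rm ord}_2(q-1)-\mathop{\rm ord}_2(p-1)$.

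Finally, I would substitute $\mathop{\rm ord}_2(q-1)=\mathop{\rm ord}_2 s+2$. (When $p\equiv3\pmod8$ this identity is valid because the hypothesis $2^r\mid q-1$ with $r\ge3$ forces $s$ to be even.) If $p\equiv3\pmod8$, then $\mathop{\rm ord}_2(p-1)=1$, so $j=\mathop{\rm ord}_2 s+1$; and since $2^r\mid q-1$ gives $r\le\mathop{\rm ord}_2 s+2$, we obtain $j\ge r-1$ and hence $\psi(4)=1$. If $p\equiv5\pmod8$, then $\mathop{\rm ord}_2(p-1)=2$, so $j=\mathop{\rm ord}_2 s$; again $r\le\mathop{\rm ord}_2 s+2$ gives $j\ge r-2$, so $\psi(4)=\pm1$, with $\psi(4)=1$ exactly when $\mathop{\rm ord}_2 s\ge r-1$, i.e. $2^{r-1}\mid s$, and $\psi(4)=-1$ exactly when $\mathop{\rm ord}_2 s=r-2$, i.e. $s/2^{r-2}$ is odd. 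Both cases are captured by $\psi(4)=(-1)^{s/2^{r-2}}$. There is no real obstacle here: the content is the $2$-adic bookkeeping together with the two standard inputs — the value of $\bigl(\tfrac2p\bigr)$ and the formula for $\mathop{\rm ord}_2(p^s-1)$ — and the one point requiring a little care is that the relation $2=\gamma^{((q-1)/(p-1))j_0}$ involves no wrap-around modulo $q-1$.
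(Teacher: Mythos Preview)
Your argument is correct. The paper itself does not give a proof of this lemma but simply cites \cite[Lemma~2.16]{B2}, so there is no ``paper approach'' to compare against in detail; your direct discrete-logarithm computation is a clean, self-contained alternative to that external reference. The only delicate points --- that $j_0$ is odd because $(2/p)=-1$ for $p\equiv3,5\pmod8$, that the identity $\ell=\frac{q-1}{p-1}\,j_0$ holds at the level of integers (no reduction modulo $q-1$), and that the formula $\mathop{\rm ord}_2(q-1)=\mathop{\rm ord}_2 s+2$ requires $s$ even when $p\equiv3\pmod8$ (which is forced by $2^r\mid q-1$, $r\ge3$) --- are all addressed explicitly. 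The final case analysis in terms of $j=\mathop{\rm ord}_2(\ell)$ is accurate and yields the stated values.
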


\begin{proof}
See \cite[Lemma 2.16]{B2}.
\end{proof}

\begin{lemma}
\label{l10}
Let $p\equiv 3\text{\;or\;}5\pmod{8}$, $n\ge 1$ and $r\ge 3$ be integers, $r\ge n$. Then
$$
\sum_{v=0}^{2^{r-2}-1}\zeta_{2^n}^{p^v}=\begin{cases}
-2^{r-2}&\text{if $n=1$,}\\
2^{r-2}i&\text{if $n=2$ and $p\equiv 5\pmod{8}$,}\\
2^{r-3}i\sqrt{2}&\text{if $n=3$ and $p\equiv 3\pmod{8}$,}\\
0&\text{otherwise.}
\end{cases}
$$
\end{lemma}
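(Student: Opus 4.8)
The plan is to reduce the sum to a character sum over the cyclic subgroup $\langle p\rangle\le(\mathbb{Z}/2^n\mathbb{Z})^*$ generated by $p$, and then to evaluate that subgroup sum by splitting $\langle p\rangle$ into arithmetic progressions and summing geometric series; since $\zeta_{2^n}^{p^v}$ depends only on $p^v\bmod 2^n$, all the information needed is $\langle p\rangle$ together with $d:=\operatorname{ord}_{2^n}(p)$. First one disposes of the small cases. For $n=1$, $p^v$ is odd, so $\zeta_2^{p^v}=-1$ and the sum is $-2^{r-2}$. For $n=2$: if $p\equiv 5\pmod 8$ then $p\equiv 1\pmod 4$, every $p^v\equiv 1\pmod 4$, so each term equals $\zeta_4=i$ and the sum is $2^{r-2}i$; if $p\equiv 3\pmod 8$ then $p\equiv 3\pmod 4$, so $p^v\equiv 1\pmod 4$ for even $v$ and $p^v\equiv 3\pmod 4$ for odd $v$, and since $r\ge 3$ makes $2^{r-2}$ even there are equally many of each, whence the sum is $2^{r-3}(\zeta_4+\zeta_4^3)=0$.

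For $n\ge 3$ one argument covers everything. The group $(\mathbb{Z}/2^n\mathbb{Z})^*$ has exponent $2^{n-2}$, so $d\mid 2^{n-2}$, and then $d\mid 2^{r-2}$ because $r\ge n$; consequently $p^v\bmod 2^n$ runs through $\langle p\rangle$ exactly $2^{r-2}/d$ times as $v$ runs through $0,\dots,2^{r-2}-1$, giving
$$\sum_{v=0}^{2^{r-2}-1}\zeta_{2^n}^{p^v}=\frac{2^{r-2}}{d}\sum_{c\in\langle p\rangle}\zeta_{2^n}^{\,c}.$$
The next step is to pin down $\langle p\rangle$. If $p\equiv 5\pmod 8$, then $\langle p\rangle$ is the index-$2$ subgroup $\{x\equiv 1\pmod 4\}=\{4k+1:0\le k<2^{n-2}\}$, so
$$\sum_{c\in\langle p\rangle}\zeta_{2^n}^{\,c}=\zeta_{2^n}\sum_{k=0}^{2^{n-2}-1}\zeta_{2^{n-2}}^{k}=0$$
since $2^{n-2}\ge 2$. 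If $p\equiv 3\pmod 8$, then $\langle p\rangle=\{x\equiv 1\pmod 8\}\cup\{x\equiv 3\pmod 8\}=\{8k+1:0\le k<2^{n-3}\}\cup\{8k+3:0\le k<2^{n-3}\}$, so
$$\sum_{c\in\langle p\rangle}\zeta_{2^n}^{\,c}=(\zeta_{2^n}+\zeta_{2^n}^{3})\sum_{k=0}^{2^{n-3}-1}\zeta_{2^{n-3}}^{k},$$
which is $0$ for $n\ge 4$ and equals $\zeta_8+\zeta_8^{3}=i\sqrt 2$ for $n=3$. Since $d=2$ in the one surviving case $n=3$, $p\equiv 3\pmod 8$, this produces $2^{r-3}i\sqrt 2$, and $0$ in every other case with $n\ge 3$; together with the small cases this is the asserted formula.

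The one genuinely substantive point is the identification of $\langle p\rangle$ above, equivalently the assertion $\operatorname{ord}_{2^n}(p)=2^{n-2}$ for $n\ge 3$. I would deduce it from the standard decomposition $(\mathbb{Z}/2^n\mathbb{Z})^*=\langle-1\rangle\times\langle 5\rangle$ with $\langle 5\rangle$ cyclic of order $2^{n-2}$, using that a residue generates $\langle 5\rangle$ precisely when it is $\equiv 5\pmod 8$ (the non-generators being the squares, i.e.\ the residues $\equiv 1\pmod 8$): for $p\equiv 5\pmod 8$ this shows $p$ generates $\langle 5\rangle=\{x\equiv 1\pmod 4\}$, whereas for $p\equiv 3\pmod 8$ one has $-p\equiv 5\pmod 8$, so $p^2=(-p)^2$ generates $\{x\equiv 1\pmod 8\}$ while $p\notin\langle 5\rangle$, which together with $\langle p\rangle\subseteq\{x\equiv 1\text{ or }3\pmod 8\}$ forces equality. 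Alternatively, the elementary congruence $p^{2^{j}}\equiv 1+2^{j+2}\pmod{2^{j+3}}$, proved by induction on $j$ (base $j=0$ if $p\equiv 5\pmod 8$, base $j=1$ if $p\equiv 3\pmod 8$), yields $d=2^{n-2}$ and, on taking $j=n-3$, the relation $1+2^{n-1}\in\langle p\rangle$ that by itself forces the geometric sum to vanish for $n\ge 4$. Beyond this, the proof is routine bookkeeping over the cases $n=1,2,3$ and $n\ge 4$.
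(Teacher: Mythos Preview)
Your argument is correct. The paper does not actually prove this lemma; it merely cites it as an immediate consequence of \cite[Lemma~2.2]{B2}. Your proof, by contrast, is self-contained: you reduce the sum to the subgroup sum $\sum_{c\in\langle p\rangle}\zeta_{2^n}^c$, identify $\langle p\rangle\le(\mathbb Z/2^n\mathbb Z)^*$ explicitly (as $\{x\equiv 1\pmod 4\}$ or $\{x\equiv 1,3\pmod 8\}$ according to $p\bmod 8$), and evaluate via geometric series. The identification of $\langle p\rangle$ is the only nontrivial step, and your two justifications (structure of $(\mathbb Z/2^n\mathbb Z)^*=\langle-1\rangle\times\langle5\rangle$, or the inductive congruence $p^{2^j}\equiv1+2^{j+2}\pmod{2^{j+3}}$) both work; the second is cleaner and in particular gives $\operatorname{ord}_{2^n}(p)=2^{n-2}$ directly. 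The advantage of your approach is that a reader need not chase the external reference; the paper's advantage is brevity, since the result was already established in the author's earlier work.
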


\begin{proof}
It is an immediate consequence of \cite[Lemma 2.2]{B2}.
\end{proof}

 The next lemma relates Gauss sums over $\mathbb F_q$ to Jacobi sums over a subfield of $\mathbb F_q$.

\begin{lemma}
\label{l11}
Let $p\equiv 3\text{\;or\;}5\pmod{8}$, and $\psi$ be a character of order $2^r$  on $\mathbb F_q$, where
$$
r\ge n=\begin{cases}
3&\text{if $p\equiv 3\pmod{8}$,}\\
2&\text{if $p\equiv 5\pmod{8}$,}
\end{cases}
$$
Assume that $2^{r-1}\mid s$. Then $\psi^{2^{r-n}}$  is equal to the lift of some character $\chi$ of order~$2^n$ on $\mathbb F_{p^{s/2^{r-n+1}}}$. Moreover,  
$$
G(\psi)=q^{(2^{r-n+1}-1)/2^{r-n+2}}J(\chi)\cdot\begin{cases}
1&\text{if $p\equiv 3\pmod{8}$,}\\
(-1)^{s(r-1)/2^{r-1}}&\text{if $p\equiv 5\pmod{8}$.}
\end{cases}
$$
\end{lemma}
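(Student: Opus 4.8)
The plan hinges on one structural observation: because $2^{r-1}\mid s$, the character $\psi$ is \emph{itself} the lift of a character of order $2^r$ from the subfield $\mathbb F_{p^{s/2}}$. Indeed, using $\mathop{\rm ord}_2(p^k-1)=\mathop{\rm ord}_2 k+2$ (valid for $p\equiv 5\pmod 8$, and for $p\equiv 3\pmod 8$ when $k$ is even), the hypothesis $2^{r-1}\mid s$ yields $\mathop{\rm ord}_2(p^{s/2}-1)=\mathop{\rm ord}_2(s/2)+2\ge r$, so $2^r\mid p^{s/2}-1$ and Lemma~\ref{l6}(b, c) produces a character $\tilde\psi$ of order $2^r$ on $\mathbb F_{p^{s/2}}$ lifting to $\psi$. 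The same estimate gives $\mathop{\rm ord}_2(p^{s/2^{r-n+1}}-1)\ge n$ (this is where the exact value of $n$ enters, along with the fact that $s/2^{r-n+1}$ is even when $p\equiv 3\pmod 8$), so $\psi^{2^{r-n}}$, of order $2^n$, is by Lemma~\ref{l6} the lift of a character $\chi$ of order $2^n$ on $\mathbb F_{p^{s/2^{r-n+1}}}$; transitivity and injectivity of lifting identify $\chi$ with the descent of $\tilde\psi^{2^{r-n}}$. This gives the first assertion.

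I would then prove the formula for $G(\psi)$ by induction on $r$, the base case being $r=n$. In every case Lemma~\ref{l7}, applied to the quadratic extension $\mathbb F_q/\mathbb F_{p^{s/2}}$, together with Lemma~\ref{l5}, gives
$$
G(\psi)=-G(\tilde\psi)^2=-G(\tilde\psi^2)\,J(\tilde\psi).
$$
When $r=n$ one has $\tilde\psi=\chi$, so $\tilde\psi^2=\chi^2$ has order $2^{n-1}$ (equal to $4$ if $p\equiv 3\pmod 8$ and to $2$ if $p\equiv 5\pmod 8$), and $G(\chi^2)$ is computed outright: Lemma~\ref{l4} gives $-(p^{s/2})^{1/2}=-q^{1/4}$ in the first case, and Lemma~\ref{l3} gives $(-1)^{s/2-1}q^{1/4}$ in the second. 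Substituting into the displayed identity produces exactly the asserted formula in terms of $J(\chi)$.

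When $r>n$, $\tilde\psi^2$ has order $2^{r-1}$ on $\mathbb F_{p^{s/2}}$ and satisfies $2^{(r-1)-1}\mid s/2$, so the inductive hypothesis evaluates $G(\tilde\psi^2)$ in terms of $J$ of a character of order $2^n$ on $\mathbb F_{p^{s/2^{r-n+1}}}$; transitivity of lifting shows this character has the same lift $\psi^{2^{r-n}}$ to $\mathbb F_q$ as $\chi$, hence equals $\chi$. For the remaining factor $J(\tilde\psi)$: since $\tilde\psi$ has order $2^r$ with $r\ge n+1$ ($\ge 3$ if $p\equiv 5\pmod 8$, $\ge 4$ if $p\equiv 3\pmod 8$), Lemma~\ref{l2}(c) and Lemma~\ref{l8} give $G(\tilde\psi)^2=\overline{\tilde\psi}(4)\,G(\tilde\psi^2)\,G(\tilde\rho)$, where $\tilde\rho$ is the quadratic character of $\mathbb F_{p^{s/2}}$; comparing with Lemma~\ref{l5} gives $J(\tilde\psi)=\overline{\tilde\psi}(4)\,G(\tilde\rho)$, where $\overline{\tilde\psi}(4)$ is supplied by Lemma~\ref{l9} and $G(\tilde\rho)$ by Lemma~\ref{l3}. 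Multiplying the three factors, the power of $q$ comes out as $(2^{r-n}-1)/2^{r-n+2}+1/4=(2^{r-n+1}-1)/2^{r-n+2}$, as needed.

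The only delicate part, which I expect to be the real obstacle, is the bookkeeping of the root-of-unity factors and showing they collapse to precisely the constant in the statement. This is where $2^{r-1}\mid s$ is used a final time: in the inductive step $r\ge n+1$, so $4\mid s$ (and $8\mid s$ when $p\equiv 3\pmod 8$), hence Lemma~\ref{l3} gives $G(\tilde\rho)=-q^{1/4}$ in both congruence classes — the stray factor $i^{s/2}$ occurring for $p\equiv 3\pmod 8$ being $1$. The sign $-1$ from Lemma~\ref{l7} cancels this; for $p\equiv 3\pmod 8$ also $\overline{\tilde\psi}(4)=1$ and the inductive hypothesis carries no sign, giving constant $1$, whereas for $p\equiv 5\pmod 8$ the factor $\overline{\tilde\psi}(4)=(-1)^{s/2^{r-1}}$ combines with the sign $(-1)^{s(r-2)/2^{r-1}}$ from the inductive hypothesis, via $(r-2)+1=r-1$, into $(-1)^{s(r-1)/2^{r-1}}$. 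The base case is checked in the same way using Lemma~\ref{l4} (resp. Lemma~\ref{l3}). This completes the induction.
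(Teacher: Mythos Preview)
Your proposal is correct and follows essentially the same inductive strategy as the paper: lift $\psi$ down to $\mathbb F_{p^{s/2}}$, apply Davenport--Hasse (Lemma~\ref{l7}) together with Lemma~\ref{l5} to write $G(\psi)=-G(\tilde\psi^2)J(\tilde\psi)$, handle the base case $r=n$ via Lemmas~\ref{l3}/\ref{l4}, and in the inductive step replace $J(\tilde\psi)$ by $\overline{\tilde\psi}(4)G(\tilde\rho)$ using Lemmas~\ref{l2}(c) and~\ref{l8}, with the sign bookkeeping carried out exactly as in the paper. The only organisational difference is that you establish the lift assertion for $\psi^{2^{r-n}}$ once at the outset (via the $2$-adic valuation of $p^{s/2^{r-n+1}}-1$ and injectivity/transitivity of lifting), whereas the paper verifies it within the induction by an explicit norm computation; both are valid and lead to the same proof.
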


\begin{proof}
We prove the assertion of the lemma by induction on $r$, for $r\ge n$. Let $2^{n-1}\mid s$ and $\psi$ be a character of order $2^n$ on $\mathbb F_q$. As $2^n\mid(p^{s/2}-1)$, Lemma~\ref{l6} shows that $\psi$ is equal to the lift of some character $\chi$ of order $2^n$ on $\mathbb F_{p^{s/2}}$, that is, $\chi'=\psi$. Lemmas~\ref{l5} and \ref{l7} yield $G(\psi)=G(\chi')=-G(\chi)^2=-G(\chi^2)J(\chi)$. Note that $\chi^2$ has order~$2^{n-1}$. Thus, by Lemmas~\ref{l3} and \ref{l4}, 
$$
G(\chi^2)=\begin{cases}
-q^{1/4}&\text{if $p\equiv 3\pmod{8}$,}\\
(-1)^{(s/2)-1}q^{1/4}&\text{if $p\equiv 5\pmod{8}$,}
\end{cases}
$$
and so 
$$
G(\psi)=q^{1/4}J(\chi)\cdot\begin{cases}
1&\text{if $p\equiv 3\pmod{8}$,}\\
(-1)^{s/2}&\text{if $p\equiv 5\pmod{8}$.}
\end{cases}
$$
This completes the proof for the case $r=n$.

Suppose now that $r\ge n+1$, and assume that the result is true when $r$ is replaced by $r-1$. Let $2^{r-1}\mid s$ and $\psi$ be a character of order $2^r$  on $\mathbb F_q$. Then $2^{r-2}\mid\frac s2$, and so $2^r\mid(p^{s/2}-1)$. By Lemma~\ref{l6}, $\psi$ is equal to the lift of some character $\phi$ of order $2^r$ on $\mathbb F_{p^{s/2}}$, that is $\phi'=\psi$. Applying Lemmas \ref{l2}(c), \ref{l3}, \ref{l7}, \ref{l8} and using the fact that $2^n\mid s$, we deduce
\begin{equation}
\label{eq1}
G(\psi)=-G(\phi)^2=-G(\phi)G(\phi\rho_0)=-\bar\phi(4)G(\phi^2)G(\rho_0)=\bar\phi(4)q^{1/4}G(\phi^2),
\end{equation}
where $\rho_0$ denotes the quadratic character on $\mathbb F_{p^{s/2}}$. Note that $\phi^2$ has order $2^{r-1}$ and $2^{r-2}\mid\frac s2$. Hence, by inductive hypothesis, $(\phi^2)^{2^{r-1-n}}=\phi^{2^{r-n}}$ is equal to the lift of some character $\chi$ of order $2^n$ on $\mathbb F_{p^{(s/2)/2^{r-n}}}=\mathbb F_{p^{s/2^{r-n+1}}}$ and 
$$
G(\phi^2)=(p^{s/2})^{(2^{r-n}-1)/2^{r-n+1}}J(\chi)\cdot\begin{cases}
1&\text{if $p\equiv 3\pmod{8}$,}\\
(-1)^{(s/2)(r-2)/2^{r-2}}&\text{if $p\equiv 5\pmod{8}$,}
\end{cases}
$$
that is,
$$
G(\phi^2)=q^{(2^{r-n}-1)/2^{r-n+2}}J(\chi)\cdot\begin{cases}
1&\text{if $p\equiv 3\pmod{8}$,}\\
(-1)^{s(r-2)/2^{r-1}}&\text{if $p\equiv 5\pmod{8}$.}
\end{cases}
$$
Substituting this expression for $G(\phi^2)$ into \eqref{eq1} and using Lemma~\ref{l9}, we obtain
$$
G(\psi)=q^{(2^{r-n+1}-1)/2^{r-n+2}}J(\chi)\cdot\begin{cases}
1&\text{if $p\equiv 3\pmod{8}$,}\\
(-1)^{s(r-1)/2^{r-1}}&\text{if $p\equiv 5\pmod{8}$.}
\end{cases}
$$
It remains to show that $\psi^{2^{r-n}}$  is equal to the lift of $\chi$. Indeed, for any $x\in\mathbb F_q$ we have
\begin{align*}
\chi({\mathop{\rm N}}_{\mathbb F_q/\mathbb F_{p^{s/2^{r-n+1}}}}(x))&=\chi(x^{(p^s-1)/(p^{s/2^{r-n+1}}-1)})\\
&=\chi((x^{(p^s-1)/(p^{s/2}-1)})^{(p^{s/2}-1)/(p^{s/2^{r-n+1}}-1)})\\
&=\chi({\mathop{\rm N}}_{\mathbb F_{p^{s/2}}/\mathbb F_{p^{s/2^{r-n+1}}}}(x^{(p^s-1)/(p^{s/2}-1)}))=\phi^{2^{r-n}}(x^{(p^s-1)/(p^{s/2}-1)})\\
&=\left(\phi({\mathop{\rm N}}_{\mathbb F_{p^s}/\mathbb F_{p^{s/2}}}(x))\right)^{2^{r-n}}=\psi^{2^{r-n}}(x).
\end{align*}
Therefore $\chi'=\psi^{2^{r-n}}$, and the result now follows by the principle of mathematical induction.
\end{proof}

For an arbitrary integer $k$, it is convenient to set $\eta_k^*=\eta_{\ell}^*$, where $k\equiv {\ell}\pmod{e}$, $0\le\ell\le e-1$.

\begin{lemma}
\label{l12}
For any integer $k$, $\eta_{kp}^*=\eta_k^*$.
\end{lemma}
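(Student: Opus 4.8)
\section*{Proof proposal for Lemma~\ref{l12}}

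The plan is to exploit two elementary facts about $\mathbb{F}_q$: the map $x\mapsto x^p$ is a bijection of $\mathbb{F}_q$ onto itself (it is the Frobenius automorphism), and the trace is invariant under $p$-th powering, i.e. $\mathrm{Tr}(z^p)=\mathrm{Tr}(z)$ for every $z\in\mathbb{F}_q$, since raising to the $p$-th power merely cyclically permutes the summands $z,z^p,\dots,z^{p^{s-1}}$ in the definition of $\mathrm{Tr}(z)$.

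First I would record that the closed formula $\eta_k^*=\sum_{x\in\mathbb{F}_q}\zeta_p^{\mathrm{Tr}(\gamma^k x^e)}$ remains valid for \emph{every} integer $k$, not only for $0\le k\le e-1$: replacing $k$ by $k+e$ and substituting $x\mapsto\gamma^{-1}x$ leaves the sum unchanged because $\gamma^{k+e}(\gamma^{-1}x)^e=\gamma^k x^e$, which is consistent with the convention $\eta_k^*=\eta_{\ell}^*$ for $k\equiv\ell\pmod e$ adopted just before the lemma. With this in hand, write
$$
\eta_{kp}^*=\sum_{x\in\mathbb{F}_q}\zeta_p^{\mathrm{Tr}(\gamma^{kp}x^e)},
$$
and replace the summation variable $x$ by $x^p$, which runs over all of $\mathbb{F}_q$ as $x$ does. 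This gives
$$
\eta_{kp}^*=\sum_{x\in\mathbb{F}_q}\zeta_p^{\mathrm{Tr}(\gamma^{kp}x^{pe})}=\sum_{x\in\mathbb{F}_q}\zeta_p^{\mathrm{Tr}\bigl((\gamma^k x^e)^p\bigr)},
$$
and applying $\mathrm{Tr}(z^p)=\mathrm{Tr}(z)$ with $z=\gamma^k x^e$ yields $\eta_{kp}^*=\sum_{x\in\mathbb{F}_q}\zeta_p^{\mathrm{Tr}(\gamma^k x^e)}=\eta_k^*$, as required.

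There is essentially no serious obstacle here; the only point deserving a word of care is the bookkeeping step that legitimizes using the literal exponent $\gamma^{kp}$ in the sum (rather than its residue modulo $q-1$), which is handled as above. As an alternative one could argue via Lemma~\ref{l1}: writing $\eta_k^*=\sum_{j=1}^{e-1}G(\psi^j)\zeta_e^{-jk}$, reindexing $j\mapsto jp$ — a permutation of the residues $1,\dots,e-1$ modulo $e$, since $\gcd(p,e)=1$ when $e$ is a power of $2$ — and invoking $G(\psi^{j})=G(\psi^{jp})$ from Lemma~\ref{l2}(b); but the direct substitution argument is shorter and requires no hypothesis on $e$.
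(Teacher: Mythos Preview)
Your proof is correct. The paper itself does not give an argument at all but simply cites \cite[Proposition~1]{G1}, so your direct Frobenius substitution $x\mapsto x^p$ combined with $\mathrm{Tr}(z^p)=\mathrm{Tr}(z)$ is in fact more self-contained than what the paper offers. The bookkeeping you flag (periodicity of the exponent $\gamma^k$ modulo $e$) is handled correctly, and your alternative route via Lemma~\ref{l1} and $G(\psi^j)=G(\psi^{jp})$ is also valid for arbitrary $e\mid q-1$ since $\gcd(p,e)=1$ automatically; either approach is entirely adequate here.
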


\begin{proof}
It is a straightforward consequence of \cite[Proposition~1]{G1}.
\end{proof}

From now on we shall assume that $p\equiv 3\text{\;or\;}5\pmod{8}$, $e=2^m$ with $m\ge 3$, and $\lambda$ is a character of order $2^m$ on $\mathbb F_q$ such that $\lambda(\gamma)=\zeta_{2^m}$. We observe that $2^{m-2}\mid s$. 

\begin{lemma}
\label{l13}
We have
$$
P_{2^m}^*(X)=(X-\eta_0^*)(X-\eta_{2^{m-1}}^*)\prod_{t=0}^{m-2}(X-\eta_{2^t}^*)^{2^{m-t-2}}(X-\eta_{-2^t}^*)^{2^{m-t-2}}.
$$
\end{lemma}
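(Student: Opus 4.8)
The plan is to regroup the $2^m$ linear factors of $P_{2^m}^*(X)=\prod_{k=0}^{2^m-1}(X-\eta_k^*)$ according to the orbits of the indices under multiplication by $p$ on $\mathbb Z/2^m\mathbb Z$. By Lemma~\ref{l12} the value $\eta_k^*$ depends only on the orbit of $k$, so the task reduces to describing these orbits. First I would partition $\{0,1,\dots,2^m-1\}$ by the $2$-adic valuation of the index: the singletons $\{0\}$ and $\{2^{m-1}\}$, together with the sets $S_t=\{\,2^tu\bmod 2^m:u\ \text{odd}\,\}$ for $t=0,1,\dots,m-2$, each of cardinality $2^{m-t-1}$. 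Since $p$ is odd we have $0\cdot p\equiv 0$ and $2^{m-1}p\equiv 2^{m-1}\pmod{2^m}$, so $\{0\}$ and $\{2^{m-1}\}$ are orbits and contribute the factors $X-\eta_0^*$ and $X-\eta_{2^{m-1}}^*$.

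For $t\le m-2$ the orbit of $2^tu$ is $2^t\bigl(u\langle p\rangle\bmod 2^{m-t}\bigr)$, i.e.\ the coset of $u$ in $(\mathbb Z/2^{m-t}\mathbb Z)^\times$ multiplied by $2^t$; thus everything hinges on the structure of the subgroup $\langle p\rangle$ of $(\mathbb Z/2^n\mathbb Z)^\times$ for $2\le n\le m$. Using the identity $\mathop{\rm ord}_2(p^k-1)=\mathop{\rm ord}_2(p-1)+\mathop{\rm ord}_2(k)$ when $p\equiv 5\pmod 8$ (where $\mathop{\rm ord}_2(p-1)=2$), respectively $\mathop{\rm ord}_2(p^{2k}-1)=3+\mathop{\rm ord}_2(k)$ together with the fact that odd powers of $p$ are $\equiv 3\pmod 4$ when $p\equiv 3\pmod 8$, one checks that for $n\ge 3$ the order of $p$ modulo $2^n$ is exactly $2^{n-2}$, so that $\langle p\rangle$ has index $2$ in $(\mathbb Z/2^n\mathbb Z)^\times$. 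Furthermore $-1\notin\langle p\rangle$: the unique element of order $2$ in the cyclic group $\langle p\rangle$ is $p^{2^{n-3}}$, and $p^{2^{n-3}}\not\equiv -1\pmod 4$ when $p\equiv 5\pmod 8$ (all powers of $p$ are $\equiv 1\pmod 4$), while $p^{2^{n-3}}\equiv 1$ or $3\pmod 8$ when $p\equiv 3\pmod 8$; in either case $p^{2^{n-3}}\ne -1$. Hence for $n\ge 3$ we get the disjoint decomposition $(\mathbb Z/2^n\mathbb Z)^\times=\langle p\rangle\sqcup(-1)\langle p\rangle$ into two cosets of size $2^{n-2}$, whereas for $n=2$ the cosets $\langle p\rangle$ and $-\langle p\rangle$ equal $\{1\}$ and $\{-1\}$ when $p\equiv 5\pmod 8$, and both equal $\{1,3\}$ when $p\equiv 3\pmod 8$.

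Taking $n=m-t$, this shows that for $t\le m-2$ the orbit of any $2^tu\in S_t$ is either the orbit $2^t\langle p\rangle$ of $2^t$ or the orbit $-2^t\langle p\rangle$ of $-2^t$, so $\eta_{2^tu}^*\in\{\eta_{2^t}^*,\eta_{-2^t}^*\}$. If $m-t\ge 3$, then exactly $2^{m-t-2}$ of the $2^{m-t-1}$ odd residues $u$ lie in $\langle p\rangle$ and $2^{m-t-2}$ lie in $-\langle p\rangle$, so that $\prod_{k\in S_t}(X-\eta_k^*)=(X-\eta_{2^t}^*)^{2^{m-t-2}}(X-\eta_{-2^t}^*)^{2^{m-t-2}}$. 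If $m-t=2$, then $S_{m-2}=\{2^{m-2},-2^{m-2}\}$ and $\prod_{k\in S_{m-2}}(X-\eta_k^*)=(X-\eta_{2^{m-2}}^*)(X-\eta_{-2^{m-2}}^*)$, which has the stated shape since here $2^{m-t-2}=1$ (when $p\equiv 3\pmod 8$ the two factors coincide, as then $\eta_{2^{m-2}}^*=\eta_{-2^{m-2}}^*$, but this does not affect the polynomial identity). Multiplying together the contributions of $\{0\}$, $\{2^{m-1}\}$ and $S_0,\dots,S_{m-2}$ gives the asserted factorization of $P_{2^m}^*(X)$.

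The main obstacle is the group-theoretic input of the second paragraph: verifying that $\langle p\rangle$ has index exactly $2$ in $(\mathbb Z/2^n\mathbb Z)^\times$ and avoids $-1$ for $n\ge 3$. This is where the hypothesis $p\equiv 3$ or $5\pmod 8$ (equivalently $p\not\equiv\pm 1\pmod 8$) enters in an essential way; the remainder of the argument is routine bookkeeping with $2$-adic valuations and repeated use of $\eta_{kp}^*=\eta_k^*$.
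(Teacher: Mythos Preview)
Your proof is correct and follows essentially the same approach as the paper's: both partition the indices $k$ by their $2$-adic valuation, invoke the fact that for $n\ge 2$ the elements $\pm p^0,\pm p^1,\dots,\pm p^{2^{n-2}-1}$ form a reduced residue system modulo $2^n$ (which is exactly your statement that $(\mathbb Z/2^n\mathbb Z)^\times=\langle p\rangle\sqcup(-1)\langle p\rangle$), and then apply Lemma~\ref{l12}. The only difference is cosmetic: the paper states the reduced residue system fact without justification, whereas you supply a detailed argument for it via $2$-adic valuations and the order of $p$ modulo $2^n$.
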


\begin{proof}
Write
\begin{align*}
P_{2^m}^*(X)&=(X-\eta_0^*)(X-\eta_{2^{m-1}}^*)\prod_{t=0}^{m-2}\,\prod_{\substack{k=1\\ 2^t\parallel k}}^{2^m-1}(X-\eta_k^*)\\
&=(X-\eta_0^*)(X-\eta_{2^{m-1}}^*)\prod_{t=0}^{m-2}\,\prod_{\substack{k_0=1\\ 2\nmid k_0}}^{2^{m-t}-1}(X-\eta_{2^tk_0}^*).
\end{align*}
Since $p\equiv 3\text{\;or\;}5\pmod{8}$, \,$\pm p^0,\pm p^1,\dots, \pm p^{2^{m-t-2}-1}$ is a reduced residue system modulo $2^{m-t}$ for each $0\le t\le m-2$. Thus
$$
P_{2^m}^*(X)=(X-\eta_0^*)(X-\eta_{2^{m-1}}^*)\prod_{t=0}^{m-2}\,\prod_{j=0}^{2^{m-t-2}-1}(X-\eta_{2^t p^j}^*)(X-\eta_{-2^t p^j}^*).
$$
The result now follows from Lemma~\ref{l12}.
\end{proof}

\begin{lemma}
\label{l14}
We have
\begin{align*}
\eta_0^*=\,&G(\rho)+\sum_{r=2}^m 2^{r-2}\left(G(\lambda^{2^{m-r}})+G(\bar\lambda^{2^{m-r}})\right),\\
\eta_{2^{m-1}}^*=\,&G(\rho)+\sum_{r=2}^{m-1} 2^{r-2}\left(G(\lambda^{2^{m-r}})+G(\bar\lambda^{2^{m-r}})\right)-2^{m-2}\left(G(\lambda)+G(\bar\lambda)\right),
\end{align*}
and, for $0\le t\le m-2$,
\begin{align*}
\eta_{\pm 2^t}^*=\,&\sum_{r=2}^t 2^{r-2}\left(G(\lambda^{2^{m-r}})+G(\bar\lambda^{2^{m-r}})\right),\\
&+\begin{cases}
-G(\rho)&\text{if $t=0$,}\\
G(\rho)-2^{t-1}\left(G(\lambda^{2^{m-t-1}})+G(\bar\lambda^{2^{m-t-1}})\right)&\text{if $t>0$,}
\end{cases}\\
&\mp\begin{cases}
0&\text{if $p\equiv 3\pmod{8}$,}\\
2^ti\,\left(G(\lambda^{2^{m-t-2}})-G(\bar\lambda^{2^{m-t-2}})\right)&\text{if $p\equiv 5\pmod{8}$,}
\end{cases}\\
&\mp\begin{cases}
2^ti\sqrt{2}\,\left(G(\lambda^{2^{m-t-3}})-G(\bar\lambda^{2^{m-t-3}})\right)&\text{if $p\equiv 3\pmod{8}$ and $t\le m-3$,}\\
0&\text{otherwise.}
\end{cases}
\end{align*}
\end{lemma}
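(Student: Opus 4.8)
The plan is to begin from the Fourier expansion of Lemma~\ref{l1}, $\eta_k^*=\sum_{j=1}^{2^m-1}G(\lambda^j)\zeta_{2^m}^{-jk}$, and to gather the summands according to the order of the character $\lambda^j$. Writing $j=2^{m-r}j_0$ with $1\le r\le m$ and $j_0$ odd, $1\le j_0\le 2^r-1$, the character $\lambda^j=(\lambda^{2^{m-r}})^{j_0}$ has order $2^r$ and $\zeta_{2^m}^{-jk}=\zeta_{2^r}^{-j_0k}$; the term $r=1$ contributes $G(\rho)\zeta_2^{-k}=(-1)^kG(\rho)$. For $2\le r\le m$ I would use the fact, recalled in the proof of Lemma~\ref{l13}, that $\pm p^0,\dots,\pm p^{2^{r-2}-1}$ is a reduced residue system modulo $2^r$, together with Lemma~\ref{l2}(b) (iterated, which is legitimate since each odd power of $\lambda^{2^{m-r}}$ again has order $2^r\ge 4$), to conclude that $G(\lambda^j)=G(\lambda^{2^{m-r}})$ if $j_0\equiv p^v\pmod{2^r}$ and $G(\lambda^j)=G(\bar\lambda^{2^{m-r}})$ if $j_0\equiv -p^v\pmod{2^r}$. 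This should reduce the whole lemma to the single identity
\begin{equation*}
\eta_k^*=(-1)^kG(\rho)+\sum_{r=2}^m\Bigl(G(\lambda^{2^{m-r}})\,\Sigma_r^-(k)+G(\bar\lambda^{2^{m-r}})\,\Sigma_r^+(k)\Bigr),\qquad \Sigma_r^{\pm}(k):=\sum_{v=0}^{2^{r-2}-1}\zeta_{2^r}^{\pm p^v k}.
\end{equation*}

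The remaining task is to evaluate $\Sigma_r^{\pm}(k)$ for $k\in\{0,\,2^{m-1}\}\cup\{\pm 2^t:0\le t\le m-2\}$. Since $\Sigma_r^{\pm}(k)=2^{r-2}$ whenever $2^r\mid k$, and $2^r\mid 2^{m-1}$ for $r\le m-1$ while $\zeta_{2^m}^{\pm p^v2^{m-1}}=(-1)^{p^v}=-1$, the formulas for $\eta_0^*$ and $\eta_{2^{m-1}}^*$ will follow at once. For $k=\pm2^t$ one has $\zeta_{2^r}^{\pm p^v2^t}=1$ when $r\le t$ and $\zeta_{2^r}^{\pm p^v2^t}=\zeta_{2^{r-t}}^{\pm p^v}$ when $r>t$, so for $r\ge 3$ the sum $\Sigma_r^{\pm}(\pm2^t)$ is read off from Lemma~\ref{l10} with $n=r-t$, while $\Sigma_2^{\pm}(\pm2^t)=\zeta_4^{\pm2^t}$ is computed directly. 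Lemma~\ref{l10} then makes every contribution with $r\ge t+4$ vanish; the contributions with $2\le r\le t$ sum to $\sum_{r=2}^t2^{r-2}(G(\lambda^{2^{m-r}})+G(\bar\lambda^{2^{m-r}}))$; the $r=t+1$ contribution is $-2^{t-1}(G(\lambda^{2^{m-t-1}})+G(\bar\lambda^{2^{m-t-1}}))$; the $r=t+2$ contribution is $\mp 2^ti\bigl(G(\lambda^{2^{m-t-2}})-G(\bar\lambda^{2^{m-t-2}})\bigr)$ when $p\equiv 5\pmod{8}$ and zero when $p\equiv 3\pmod{8}$, $t\ge 1$ (the ``otherwise'' clause of Lemma~\ref{l10}); and the $r=t+3$ contribution, present only when $t\le m-3$, is $\mp 2^ti\sqrt 2\bigl(G(\lambda^{2^{m-t-3}})-G(\bar\lambda^{2^{m-t-3}})\bigr)$ when $p\equiv 3\pmod{8}$ and zero when $p\equiv 5\pmod{8}$. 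Keeping track of which of $\zeta_{2^{r-t}}^{p^v},\zeta_{2^{r-t}}^{-p^v}$ multiplies $G(\lambda^{2^{m-r}})$ and which multiplies $G(\bar\lambda^{2^{m-r}})$ accounts for the $\mp$ signs, and combining $(-1)^{\pm2^t}G(\rho)$ (namely $-G(\rho)$ for $t=0$ and $G(\rho)$ for $t\ge1$) with the $r=t+1$ term gives the bracketed alternatives in the statement.

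A single case needs an extra ingredient. When $p\equiv 3\pmod{8}$ and $t=0$, Lemma~\ref{l10} is unavailable for the order-$4$ part, and the $r=2$ contribution works out to $\mp i\bigl(G(\lambda^{2^{m-2}})-G(\bar\lambda^{2^{m-2}})\bigr)$, which is not yet of the required shape. Here I would note that $2^{m-2}\mid s$ with $m\ge 3$ forces $2\mid s$, so Lemma~\ref{l4} applies to the biquadratic characters $\lambda^{2^{m-2}}$ and $\bar\lambda^{2^{m-2}}$ and gives $G(\lambda^{2^{m-2}})=G(\bar\lambda^{2^{m-2}})=-q^{1/2}$; hence this term vanishes, consistently with the $p\equiv 3$ branch of the statement (for $t\ge 1$ and $p\equiv 3$ the analogous $r=t+2\ge 3$ term already vanishes by Lemma~\ref{l10}, so no such appeal is needed). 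I expect the main obstacle to be the sign bookkeeping in the $r=t+2$ and $r=t+3$ contributions --- keeping $\eta_{+2^t}^*$ and $\eta_{-2^t}^*$ correctly apart --- together with the need to treat the order-$4$ part separately, since Lemma~\ref{l10} requires $r\ge 3$ and for $p\equiv 3\pmod{8}$ that part is resolved only via Lemma~\ref{l4}.
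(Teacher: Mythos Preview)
Your proposal is correct and follows essentially the same route as the paper's proof: start from Lemma~\ref{l1}, group by order of $\lambda^j$, use the reduced residue system $\pm p^0,\dots,\pm p^{2^{r-2}-1}$ together with Lemma~\ref{l2}(b) to obtain the master identity (your displayed formula is exactly the paper's equation~\eqref{eq2}), and then evaluate the inner sums via Lemma~\ref{l10}. Your identification of the one exceptional case---$p\equiv 3\pmod 8$, $t=0$, where the order-$4$ contribution does not fall under Lemma~\ref{l10} and is instead killed by Lemma~\ref{l4}---is precisely why the paper cites both Lemmas~\ref{l4} and~\ref{l10} at the end; you have simply made that step more explicit than the paper does.
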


\begin{proof}
From Lemma~\ref{l1} we deduce that
$$
\eta_k^*=\sum_{j=1}^{2^m-1}G(\lambda^j)\zeta_{2^m}^{-jk}=\sum_{r=1}^m \sum_{\substack{j=1\\ 2^{m-r}\parallel j}}^{2^m-1}G(\lambda^j)\zeta_{2^m}^{-jk}
=\sum_{r=1}^m \sum_{\substack{j_0=1\\ 2\nmid j_0}}^{2^r-1}G(\lambda^{2^{m-r}j_0})\zeta_{2^r}^{-j_0k}.
$$
Since $\lambda^{2^{m-r}}$ has order $2^r$ and, for $r\ge 2$, $\pm p^0,\pm p^1,\dots,\pm p^{2^{r-2}-1}$ is a reduced residue system modulo $2^r$, we conclude that
$$
\eta_k^*=(-1)^k G(\rho)+\sum_{r=2}^m\, \sum_{u\in\{\pm 1\}}\sum_{v=0}^{2^{r-2}-1}G(\lambda^{2^{m-r}up^v})\zeta_{2^r}^{-kup^v},
$$
or, in view of Lemma~\ref{l2}(b),
\begin{equation}
\label{eq2}
\eta_k^*=(-1)^k G(\rho)+\sum_{r=2}^m\left[G(\lambda^{2^{m-r}})\sum_{v=0}^{2^{r-2}-1}\zeta_{2^r}^{-kp^v}+G(\bar\lambda^{2^{m-r}})\sum_{v=0}^{2^{r-2}-1}\zeta_{2^r}^{kp^v}\right].
\end{equation}
The expressions for $\eta_0^*$ and $\eta_{2^{m-1}}^*$ follow immediately from \eqref{eq2}. Next we assume that $0\le t\le m-2$. If $r>t+3$, then, by Lemma~\ref{l10},
$$
\sum_{v=0}^{2^{r-2}-1}\zeta_{2^r}^{2^t p^v}=\sum_{v=0}^{2^{r-2}-1}\zeta_{2^r}^{-2^t p^v}=0,
$$
and so \eqref{eq2} yields
\begin{align*}
\eta_{2^t}^*=\,&\sum_{r=2}^t 2^{r-2}\left(G(\lambda^{2^{m-r}})+G(\bar\lambda^{2^{m-r}})\right),\\
&+\begin{cases}
-G(\rho)&\text{if $t=0$,}\\
G(\rho)-2^{t-1}\left(G(\lambda^{2^{m-t-1}})+G(\bar\lambda^{2^{m-t-1}})\right)&\text{if $t>0$,}
\end{cases}\\
&+G(\lambda^{2^{m-t-2}})\sum_{v=0}^{2^t-1}i^{-p^v}+G(\bar\lambda^{2^{m-t-2}})\sum_{v=0}^{2^t-1}i^{p^v}\\
&+\begin{cases}
G(\lambda^{2^{m-t-3}})\sum_{v=0}^{2^{t+1}-1}\zeta_8^{-p^v}+G(\bar\lambda^{2^{m-t-3}})\sum_{v=0}^{2^{t+1}-1}\zeta_8^{p^v}&\text{if $t\le m-3$,}\\
0&\text{if $t=m-2$.}
\end{cases}
\end{align*}
The asserted result now follows from Lemmas~\ref{l4} and \ref{l10}. The expression for  $\eta_{-2^t}^*$  can be obtained in a similar manner.
\end{proof}

\section{The Case $p\equiv 3\pmod{8}$}
\label{s3}

In this section, $p\equiv 3\pmod{8}$. As before, $2^m\mid(q-1)$ and $\lambda$ is a character of order~$2^m$ on $\mathbb F_q$  with $\lambda(\gamma)=\zeta_{2^m}$.  

For $3\le r\le m$, define  the integers $A_r$ and $B_r$ by
\begin{gather}
p^{s/2^{r-2}}=A_r^2+2B_r^2,\qquad A_r\equiv -1\pmod{4},\qquad p\nmid A_r,\label{eq3}\\
2B_r\equiv A_r(\gamma^{(q-1)/8}+\gamma^{3(q-1)/8})\pmod{p}.\label{eq4}
\end{gather}
It is well known that for each fixed $r$, the conditions \eqref{eq3} and \eqref{eq4} determine $A_r$ and $B_r$ uniquely.

\begin{lemma}
\label{l15}
Let $r$ be an integer with $2^{r-1}\mid s$ and $3\le r\le m$. Then
$$
G(\lambda^{2^{m-r}})+G(\bar\lambda^{2^{m-r}})=2A_r q^{(2^{r-2}-1)/2^{r-1}}
$$
and
$$
G(\lambda^{2^{m-r}})-G(\bar\lambda^{2^{m-r}})=2B_r q^{(2^{r-2}-1)/2^{r-1}}i\sqrt{2}.
$$
\end{lemma}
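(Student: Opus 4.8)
The plan is to convert the two Gauss sums into an order‑$8$ Jacobi sum over a subfield by means of Lemma~\ref{l11}, and then to recognise that Jacobi sum as the element $A_r+B_ri\sqrt2$ of $\mathbb Z[\sqrt{-2}]$ singled out by \eqref{eq3}--\eqref{eq4}. Put $\psi=\lambda^{2^{m-r}}$, a character of order $2^r$ on $\mathbb F_q$. Since $p\equiv3\pmod8$, $2^{r-1}\mid s$ and $r\ge3$, Lemma~\ref{l11} with $n=3$ produces a character $\chi$ of order $8$ on $\mathbb F_Q$, where $Q=p^{s/2^{r-2}}$, such that $\psi^{2^{r-3}}$ is the lift of $\chi$ and $G(\psi)=q^{(2^{r-2}-1)/2^{r-1}}J(\chi)$. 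Applying the same lemma to $\bar\psi=\bar\lambda^{2^{m-r}}$, whose $2^{r-3}$‑th power is the lift of $\bar\chi$, gives $G(\bar\psi)=q^{(2^{r-2}-1)/2^{r-1}}J(\bar\chi)$, and $J(\bar\chi)=\overline{J(\chi)}$. Thus both displayed identities follow, by taking the sum and the difference, once we prove $J(\chi)=A_r+B_ri\sqrt2$.

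Next I would show $J(\chi)\in\mathbb Z[\sqrt{-2}]$. By Lemma~\ref{l2}(b) we have $G(\chi)=G(\chi^p)=G(\chi^3)$, so $J(\chi^3)=G(\chi^3)^2/G(\chi^6)=G(\chi)^2/G(\chi^6)$; substituting $G(\chi^6)=\chi^2(-1)Q/G(\chi^2)$ from Lemma~\ref{l2}(a), using $G(\chi^2)^2=Q$ (since $G(\chi^2)=-Q^{1/2}$ by Lemma~\ref{l4}, which applies because $s/2^{r-2}$ is even), and noting $\chi^2(-1)=1$ (as $8\mid Q-1$ forces $(Q-1)/2\equiv0\pmod4$), one obtains $J(\chi^3)=J(\chi)$. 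Hence $J(\chi)$ is fixed by the automorphism $\zeta_8\mapsto\zeta_8^3$ of $\mathbb Q(\zeta_8)$, so it lies in $\mathbb Z[\zeta_8]\cap\mathbb Q(\sqrt{-2})=\mathbb Z[\sqrt{-2}]$. Writing $J(\chi)=A+Bi\sqrt2$ with $A,B\in\mathbb Z$ yields $A^2+2B^2=|J(\chi)|^2=Q=p^{s/2^{r-2}}$, the norm relation of \eqref{eq3}.

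It then remains to verify $p\nmid A$, $A\equiv-1\pmod4$, and the sign condition \eqref{eq4}, which together identify $(A,B)$ with $(A_r,B_r)$. For $p\nmid A$: by Lemma~\ref{l6}, $\chi$ is the lift of a character $\chi_0$ of order $8$ on $\mathbb F_{p^2}$ (as $8\mid p^2-1$), so Lemmas~\ref{l5} and~\ref{l7} give $J(\chi)=(-1)^{j-1}J(\chi_0)^j$ with $j=s/2^{r-1}$; since $G(\chi_0^2)=-p$ by Lemma~\ref{l4} (applied over $\mathbb F_{p^2}$), we have $J(\chi_0)=-G(\chi_0)^2/p$, so $J(\chi_0)=\pm p$ would force $G(\chi_0)\in\mathbb Q(i)\subset\mathbb Q(\zeta_8)$, which is impossible because the restriction of $\chi_0$ to $\mathbb F_p^*$ has order $8/\gcd(8,p+1)=2$, hence is nontrivial, and the Galois action of $\mathrm{Gal}(\mathbb Q(\zeta_8,\zeta_p)/\mathbb Q(\zeta_8))$ on $G(\chi_0)$ then shows $G(\chi_0)\notin\mathbb Q(\zeta_8)$. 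Consequently $J(\chi_0)$, and hence $J(\chi)$, is divisible by only one of the two conjugate primes of $\mathbb Z[\sqrt{-2}]$ above $p$, so $p\nmid A$. The congruence $A\equiv-1\pmod4$ follows from an appropriate congruence for the octic Jacobi sum modulo a power of $1-\zeta_8$ (the analogue of the classical quartic congruence $J(\chi)\equiv-1\pmod{2+2i}$), combined with the fact that $B$ is automatically even. Finally, $\gamma^{(q-1)/8}+\gamma^{3(q-1)/8}$ lies in $\mathbb F_p$ (being fixed by Frobenius, since $p\equiv3\pmod8$) and squares to $-2$; letting $\mathfrak p$ be the prime of $\mathbb Z[\sqrt{-2}]$ corresponding to the reduction $\sqrt{-2}\mapsto\gamma^{(q-1)/8}+\gamma^{3(q-1)/8}$ and showing $\mathfrak p\mid J(\chi)$, one gets $A+B(\gamma^{(q-1)/8}+\gamma^{3(q-1)/8})\equiv0\pmod p$; multiplying by $\gamma^{(q-1)/8}+\gamma^{3(q-1)/8}$ and using its square $-2$ gives exactly \eqref{eq4}. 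Hence $A=A_r$, $B=B_r$.

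The main obstacle is this final step, namely proving that $\mathfrak p$ (and not $\bar{\mathfrak p}$) divides $J(\chi)$ and that $A\equiv-1\pmod4$: these pin down which of the two conjugate factorisations of $Q$ in $\mathbb Z[\sqrt{-2}]$ and which overall sign are realised, and to do so one must bring in the explicit prime‑ideal decomposition of $G(\chi)$ (a Stickelberger‑type input) and carefully track how the normalisation $\lambda(\gamma)=\zeta_{2^m}$ propagates through the lifts down to $\chi$ and matches the concrete square root $\gamma^{(q-1)/8}+\gamma^{3(q-1)/8}$ of $-2$. The remaining steps---the reduction via Lemma~\ref{l11}, the membership $J(\chi)\in\mathbb Z[\sqrt{-2}]$, and the primitivity $p\nmid A_r$---are comparatively routine.
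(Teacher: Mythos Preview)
Your reduction via Lemma~\ref{l11} to an octic Jacobi sum over $\mathbb F_Q$ (with $Q=p^{s/2^{r-2}}$) is exactly the paper's first step. The paper, however, does not attempt to re-derive the value of that Jacobi sum from scratch. Instead it makes the one-line observation that the norm generator $g_0=\gamma^{(q-1)/(Q-1)}$ of $\mathbb F_Q^*$ satisfies
\[
\chi(g_0)=\chi\bigl(\mathrm N_{\mathbb F_q/\mathbb F_Q}(\gamma)\bigr)=\lambda^{2^{m-3}}(\gamma)=\zeta_8,
\]
by the definition of the lift and the normalisation $\lambda(\gamma)=\zeta_{2^m}$, and then invokes \cite[Lemma~17]{B1}, which states precisely that for an octic character normalised in this way one has $J(\chi)=A_r+B_ri\sqrt2$ with $A_r,B_r$ determined by \eqref{eq3}--\eqref{eq4}.

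Thus the ``main obstacle'' you correctly isolate---the Stickelberger-type determination of which conjugate prime divides $J(\chi)$ and the congruence $A\equiv-1\pmod 4$---is not re-proved in this paper either; it is the content of the cited reference. Your intermediate arguments (membership in $\mathbb Z[\sqrt{-2}]$ via $J(\chi^3)=J(\chi)$, the norm equation, and primitivity $p\nmid A$) are sound and essentially reconstruct the easy part of \cite[Lemma~17]{B1}, but the intended proof here is simply to pin down the normalisation of $\chi$ as above and then quote the known evaluation rather than redo it.
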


\begin{proof}
We observe that $\lambda^{2^{m-r}}$ has order $2^r$. By Lemma~\ref{l11}, $(\lambda^{2^{m-r}})^{2^{r-3}}=\lambda^{2^{m-3}}$ is equal to the lift of some octic character $\chi$ on $\mathbb F_{p^{s/2^{r-2}}}$ and 
$$
G(\lambda^{2^{m-r}})\pm G(\bar\lambda^{2^{m-r}})=q^{(2^{r-2}-1)/2^{r-1}}(J(\chi)\pm J(\bar\chi)).
$$
Note that $\gamma^{(q-1)/(p^{s/2^{r-2}-1})}$ is a generator of the cyclic group $\mathbb F_{p^{s/2^{r-2}}}^*$ and, by the defition of the lift, $\chi(\gamma^{(q-1)/(p^{s/2^{r-2}-1})})=\chi({\mathop{\rm N}}_{\mathbb F_q/\mathbb F_{p^{s/2^{r-2}}}}(\gamma))=\lambda^{2^{m-3}}(\gamma)=\zeta_8$. By \cite[Lemma~17]{B1}, $J(\chi)=A_r+B_ri\sqrt{2}$, and the result follows.
\end{proof}

We are now in a position to prove the main result of this section.

\begin{theorem}
\label{t1}
Let $p\equiv 3\pmod{8}$ and $m\ge 4$. Then $P_{2^m}^*(X)$  has a unique decomposition into irreducible polynomials over the rationals as follows:
\begin{itemize}
\item[\rm (a)] 
if $2^{m-1}\mid s$, then
\begin{align*}
P_{2^m}^*(X)=\,& (X-q^{\frac 12}+4B_3 q^{\frac 14})^{2^{m-2}} (X-q^{\frac 12}-4B_3 q^{\frac 14})^{2^{m-2}}\\
&\times (X-q^{\frac 12}+8B_4 q^{\frac 38})^{2^{m-3}} (X-q^{\frac 12}-8B_4 q^{\frac 38})^{2^{m-3}}\\
&\times \Bigl(X+3q^{\frac 12}-\sum_{r=3}^{m-2} 2^{r-1}A_r q^{\frac{2^{r-2}-1}{2^{r-1}}}+2^{m-2}A_{m-1} q^{\frac{2^{m-3}-1}{2^{m-2}}}\Bigr)^2\\
&\times \Bigl(X+3q^{\frac 12}-\sum_{r=3}^{m-1} 2^{r-1}A_r q^{\frac{2^{r-2}-1}{2^{r-1}}}+2^{m-1}A_m q^{\frac{2^{m-2}-1}{2^{m-1}}}\Bigr)\\
&\times \Bigl(X+3q^{\frac 12}-\sum_{r=3}^m 2^{r-1}A_r q^{\frac{2^{r-2}-1}{2^{r-1}}}\Bigr)\prod_{t=2}^{m-3}Q_t(X)^{2^{m-t-2}};
\end{align*}
\item[\rm (b)]
if $2^{m-2}\parallel s$ and $m\ge 5$, then
\begin{align*}
P_{2^m}^*(X)=\,& (X-q^{\frac 12}+4B_3 q^{\frac 14})^{2^{m-2}} (X-q^{\frac 12}-4B_3 q^{\frac 14})^{2^{m-2}}\\
&\times (X-q^{\frac 12}+8B_4 q^{\frac 38})^{2^{m-3}} (X-q^{\frac 12}-8B_4 q^{\frac 38})^{2^{m-3}}\\
&\times \Bigl(X+3q^{\frac 12}-\sum_{r=3}^{m-2} 2^{r-1}A_r q^{\frac{2^{r-2}-1}{2^{r-1}}}+2^{m-2}A_{m-1} q^{\frac{2^{m-3}-1}{2^{m-2}}}\Bigr)^2\\
&\times \left(\Bigl(X+3q^{\frac 12}-\sum_{r=3}^{m-1} 2^{r-1}A_r q^{\frac{2^{r-2}-1}{2^{r-1}}}\Bigr)^2+2^{2(m-1)}A_m^2 q^{\frac{2^{m-2}-1}{2^{m-2}}}\right)\\
&\times \left(\Bigl(X+3q^{\frac 12}-\sum_{r=3}^{m-3} 2^{r-1}A_r q^{\frac{2^{r-2}-1}{2^{r-1}}}+2^{m-3}A_{m-2} q^{\frac{2^{m-4}-1}{2^{m-3}}}\Bigr)^2\right.\\
&\hskip42pt+\Biggl.2^{2(m-1)}B_m^2 q^{\frac{2^{m-2}-1}{2^{m-2}}}\Biggr)^2\, \prod_{t=2}^{m-4}Q_t(X)^{2^{m-t-2}};
\end{align*}
\item[\rm (c)]
if $4\parallel s$, then
\begin{align*}
P_{16}^*(X)=\,&(X+3q^{\frac 12}+4A_3 q^{\frac 14})^2 (X-q^{\frac 12}+4B_3 q^{\frac 14})^4 (X-q^{\frac 12}-4B_3 q^{\frac 14})^4\\
&\times\left((X+3q^{\frac 12}-4A_3 q^{\frac 14})^2+64A_4^2 q^{\frac 34}\right)\left((X-q^{\frac 12})^2+64B_4^2 q^{\frac 34}\right)^2.
\end{align*}
\end{itemize}
The integers $A_r$ and $|B_r|$ are uniquely determined by~\eqref{eq3}, and
\begin{align*}
Q_t(X)=\,&\Bigl(X+3q^{\frac 12}-\sum_{r=3}^t 2^{r-1}A_r q^{\frac{2^{r-2}-1}{2^{r-1}}}+2^t A_{t+1} q^{\frac{2^{t-1}-1}{2^t}}+2^{t+2}B_{t+3}q^{\frac{2^{t+1}-1}{2^{t+2}}}\Bigr)\\
&\times\Bigl(X+3q^{\frac 12}-\sum_{r=3}^t 2^{r-1}A_r q^{\frac{2^{r-2}-1}{2^{r-1}}}+2^t A_{t+1} q^{\frac{2^{t-1}-1}{2^t}}-2^{t+2}B_{t+3}q^{\frac{2^{t+1}-1}{2^{t+2}}}\Bigr).
\end{align*}
\end{theorem}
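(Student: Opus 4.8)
The plan is to combine the structural description of $P_{2^m}^*(X)$ from Lemma~\ref{l13} with the explicit Gauss-sum formulas for the periods in Lemma~\ref{l14}, after rewriting every Gauss sum that appears via the quadratic-partition parameters $A_r,B_r$. First I would record the values needed. By Lemma~\ref{l4}, $G(\lambda^{2^{m-2}})=-q^{1/2}$ since $\lambda^{2^{m-2}}$ is biquadratic and $2\mid s$ here; by Lemma~\ref{l3}, $G(\rho)=(-1)^{s-1}i^sq^{1/2}$, which simplifies because in all three cases of the theorem $s$ is even (indeed $4\mid s$ in (a),(b) and $4\parallel s$ in (c)), so $G(\rho)=-q^{1/2}$. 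For the characters of order $2^r$ with $3\le r\le m$, Lemma~\ref{l15} gives $G(\lambda^{2^{m-r}})\pm G(\bar\lambda^{2^{m-r}})$ in terms of $A_r,B_r$, but Lemma~\ref{l15} requires $2^{r-1}\mid s$; so the division into cases (a) $2^{m-1}\mid s$, (b) $2^{m-2}\parallel s$ with $m\ge5$, (c) $m=4$, $4\parallel s$ is dictated precisely by how far up the tower the hypothesis of Lemma~\ref{l15} is available. For the top one or two layers where $2^{r-1}\nmid s$, the sums $G(\lambda^{2^{m-r}})\pm G(\bar\lambda^{2^{m-r}})$ are not individually rational, and those are the terms that will assemble into the quadratic and biquadratic irreducible factors rather than linear ones.

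Next I would substitute these into Lemma~\ref{l14}. For the ``generic'' layers $2\le t\le m-3$ (in case (a)) the term $\eta_{\pm 2^t}^*$ involves $G(\rho)$, the telescoping sum $\sum_{r=2}^t 2^{r-2}(G(\lambda^{2^{m-r}})+G(\bar\lambda^{2^{m-r}}))$ which is rational, the real correction $-2^{t-1}(G(\lambda^{2^{m-t-1}})+G(\bar\lambda^{2^{m-t-1}}))$ which is rational, and the genuinely imaginary piece $\mp 2^t i\sqrt2\,(G(\lambda^{2^{m-t-3}})-G(\bar\lambda^{2^{m-t-3}}))$, which by Lemma~\ref{l15} equals $\mp 2^t i\sqrt2 \cdot 2B_{t+3}q^{(2^{t+1}-1)/2^{t+2}} i\sqrt2 = \pm 2^{t+2}B_{t+3}q^{(2^{t+1}-1)/2^{t+2}}$, a \emph{rational} number. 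Thus $\eta_{2^t}^*$ and $\eta_{-2^t}^*$ are a conjugate pair of rationals differing only in the sign of this last term, which is exactly what produces the factor $Q_t(X)=(X-\eta_{2^t}^*)(X-\eta_{-2^t}^*)$ with the stated shape, raised to the power $2^{m-t-2}$ coming from Lemma~\ref{l13}. The same bookkeeping handles $\eta_0^*$ and $\eta_{2^{m-1}}^*$ (both rational, giving linear factors), and the layers $t=m-2$ and $t=m-1$, where the biquadratic Gauss sum $G(\lambda^{2^{m-2}})=-q^{1/2}$ and the octic sum enter; collecting these gives the $(X-q^{1/2}\pm4B_3q^{1/4})^{2^{m-2}}$ and $(X-q^{1/2}\pm8B_4q^{3/8})^{2^{m-3}}$ factors. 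In case (a) every such period is rational, so $P_{2^m}^*$ is a product of linear factors; in cases (b) and (c) the top layer (and in (c) also the $t=1$ octic layer) fails the $2^{r-1}\mid s$ hypothesis, so there $G(\lambda^{2^{m-r}})-G(\bar\lambda^{2^{m-r}})$ contributes a purely imaginary term whose square is negative, and pairing $(X-\eta)(X-\bar\eta)$ produces the irreducible quadratic or biquadratic factors $\bigl((X-a)^2+b^2q^{\cdots}\bigr)$ displayed in the statement. I would also invoke Lemma~\ref{l12} (and the reduced-residue-system remark) implicitly through Lemma~\ref{l13} to know which exponents repeat.

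Finally I would verify irreducibility and uniqueness. The degrees match: $2+\sum_{t=0}^{m-2}2\cdot 2^{m-t-2}=2^m$. For the rational linear factors nothing is needed; for the quadratic factors $(X-a)^2+c$ with $c>0$ and $a,c$ involving powers of $q^{1/2}$ one checks the discriminant $-4c$ is not a rational square, and for the biquadratic factors one checks the quartic has no rational root and does not split into two rational quadratics --- this follows because $q$ is not a perfect square in the relevant cases (as $s/2^{m-2}$ is odd, $q^{1/2^{m-2}}$ is irrational) so the relevant $A_r,B_r$-combinations cannot conspire to a rational factorization; here I would lean on the general splitting statement quoted from~\cite{M} in the introduction, which already tells us the \emph{shape} (number and degrees of irreducible factors), so it suffices to exhibit a factorization of that shape and match it to our explicit one. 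The main obstacle I anticipate is purely organizational rather than conceptual: carefully tracking the telescoping of $\sum_{r=2}^t 2^{r-2}(G(\lambda^{2^{m-r}})+G(\bar\lambda^{2^{m-r}}))$ together with the $-2^{t-1}(\cdots)$ correction and the $q$-power exponents $(2^{r-2}-1)/2^{r-1}$ across all the boundary layers $t\in\{0,1,2,m-3,m-2,m-1\}$ and the two sub-cases, and making sure the sign conventions for $B_r$ (fixed by~\eqref{eq4}) are consistent so that the $\pm$ in $Q_t$ and in the biquadratic factors is well-defined up to the stated ambiguity $|B_r|$; keeping Lemma~\ref{l8}'s hypothesis $r\ge4$ straight (it is used inside Lemma~\ref{l11}, hence Lemma~\ref{l15}) is the one genuine place where $m\ge4$ is essential and where an off-by-one in the tower would break the argument.
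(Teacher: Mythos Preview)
Your overall plan---combine Lemma~\ref{l13} with Lemma~\ref{l14}, convert Gauss sums to $A_r,B_r$ via Lemma~\ref{l15}, and read off linear factors in case~(a) while pairing conjugates in cases~(b),(c)---is exactly the paper's route, and your treatment of case~(a) and of the ``generic'' layers is correct.

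There is, however, a genuine gap in your treatment of cases~(b) and~(c). When $2^{m-2}\parallel s$ you cannot invoke Lemma~\ref{l15} at $r=m$, so you never explain \emph{why} the constants in the irreducible quadratics are precisely $2^{2(m-1)}A_m^2q^{(2^{m-2}-1)/2^{m-2}}$ and $2^{2(m-1)}B_m^2q^{(2^{m-2}-1)/2^{m-2}}$. Saying that $G(\lambda)\pm G(\bar\lambda)$ ``contributes a purely imaginary term whose square is negative'' does not produce these values; you need an actual computation of $(G(\lambda)\pm G(\bar\lambda))^2$. The paper does this in three steps you have not mentioned: (i) use Lemma~\ref{l8} and the Hasse--Davenport product relation Lemma~\ref{l2}(c), together with Lemma~\ref{l9} and Lemma~\ref{l3}, to get $G(\lambda)^2+G(\bar\lambda)^2=-q^{1/2}(G(\lambda^2)+G(\bar\lambda^2))=-2A_{m-1}q^{(2^{m-2}-1)/2^{m-2}}$; (ii) use $\lambda(-1)=-1$ (from $2^m\parallel(q-1)$) in Lemma~\ref{l2}(a) to get $2G(\lambda)G(\bar\lambda)=-2q$; and (iii) invoke the arithmetic identity $A_{m-1}=A_m^2-2B_m^2$, which comes from squaring $A_m^2+2B_m^2=p^{s/2^{m-2}}$ and checking the sign via $B_m$ odd (since $p^{s/2^{m-2}}\equiv 3\pmod 8$). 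Without step~(iii) in particular you cannot land on the statement's $A_m^2$ and $B_m^2$. Your invocation of \cite{M} at the end tells you only the \emph{degrees} of the irreducible factors, not these explicit coefficients.

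Two smaller points. First, there are no biquadratic (degree-four) irreducible factors in Theorem~\ref{t1}; those appear only in Theorem~\ref{t2} for $p\equiv 5\pmod 8$. Second, your layer indexing is off: the $(X-q^{1/2}\pm 4B_3q^{1/4})^{2^{m-2}}$ factors come from $\eta_{\pm 1}^*$ (the $t=0$ layer of Lemma~\ref{l13}), and $(X-q^{1/2}\pm 8B_4q^{3/8})^{2^{m-3}}$ from $\eta_{\pm 2}^*$ ($t=1$), not from $t=m-2$ or a nonexistent $t=m-1$.
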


\begin{proof}
Since $4\mid s$, Lemmas~\ref{l3} and \ref{l4} yield $G(\rho)=G(\lambda^{2^{m-2}})=G(\bar\lambda^{2^{m-2}})=-q^{1/2}$.  Appealing to Lemmas \ref{l14} and \ref{l15}, we deduce that
\begin{align}
\eta_0^*&=-3q^{\frac 12}+\sum\limits_{r=3}^{m-1} 2^{r-1}A_r q^{\frac{2^{r-2}-1}{2^{r-1}}}+2^{m-2}\left(G(\lambda)+G(\bar\lambda)\right),\label{eq5}\\
\eta_{2^{m-1}}^*&=-3q^{\frac 12}+\sum\limits_{r=3}^{m-1} 2^{r-1}A_r q^{\frac{2^{r-2}-1}{2^{r-1}}}-2^{m-2}\left(G(\lambda)+G(\bar\lambda)\right),\label{eq6}\\
\eta_{\pm 2^{m-2}}^*&=-3q^{\frac 12}+\sum\limits_{r=3}^{m-2} 2^{r-1}A_r q^{\frac{2^{r-2}-1}{2^{r-1}}}-2^{m-2}A_{m-1}q^{\frac{2^{m-3}-1}{2^{m-2}}},\label{eq7}\\
\eta_{\pm 2^{m-3}}^*&=\begin{cases}
q^{\frac 12}\mp 2i\sqrt{2}\left(G(\lambda)-G(\bar\lambda)\right)&\text{if $m=4$,}\\
-3q^{\frac 12}+\sum\limits_{r=3}^{m-3} 2^{r-1}A_r q^{\frac{2^{r-2}-1}{2^{r-1}}}&\\
-2^{m-3}A_{m-2}q^{\frac{2^{m-4}-1}{2^{m-3}}}\mp 2^{m-3}i\sqrt{2}\left(G(\lambda)-G(\bar\lambda)\right)&\text{if $m\ge 5$,}
\end{cases}\label{eq8}\\
\eta_{\pm 1}^*&= q^{\frac 12}\pm 4B_3 q^{\frac 14}.\label{eq9}
\end{align}
Moreover, if $m\ge 5$, then
\begin{equation}
\label{eq10}
\eta_{\pm 2}^*=q^{\frac 12}\pm 8B_4 q^{\frac 38}
\end{equation}
and, for $2\le t\le m-4$,
\begin{equation}
\eta_{\pm 2^t}^*=-3q^{\frac 12}+\sum_{r=3}^t 2^{r-1}A_r q^{\frac{2^{r-2}-1}{2^{r-1}}}-2^t A_{t+1} q^{\frac{2^{t-1}-1}{2^t}}\pm 2^{t+2}B_{t+3}q^{\frac{2^{t+1}-1}{2^{t+2}}}.\label{eq11}
\end{equation}

Assume that $2^{m-1}\mid s$. Combining \eqref{eq5}~--~\eqref{eq11} with Lemma~\ref{l15}, we obtain the values of the cyclotomic periods, which are all integers. Part~(a) now follows from Lemma~\ref{l13}.

Next assume that $2^{m-2}\parallel s$. We have $2^m\parallel(q-1)$, and so $\lambda(-1)=-1$. Hence, by 
Lemma~\ref{l2}(a),
$$
\left(G(\lambda)\pm G(\bar\lambda)\right)^2=G(\lambda)^2+G(\bar\lambda)^2\pm 2\lambda(-1)q=G(\lambda)^2+G(\bar\lambda)^2\mp 2q.
$$
Lemmas~\ref{l2}(c), \ref{l3}, \ref{l8}, \ref{l9} and \ref{l15} yield
\begin{align*}
G(\lambda)^2+G(\bar\lambda)^2&=G(\lambda)G(\lambda\rho)+G(\bar\lambda)G(\bar\lambda\rho)=\bar\lambda(4)G(\lambda^2)G(\rho)+\lambda(4)G(\bar\lambda^2)G(\rho)\\
&=-q^{1/2}(G(\lambda^2)+G(\bar\lambda^2))=-2A_{m-1}q^{(2^{m-2}-1)/2^{m-2}},
\end{align*}
and thus
\begin{equation}
\label{eq12}
\left(G(\lambda)\pm G(\bar\lambda)\right)^2=-2q^{(2^{m-2}-1)/2^{m-2}}(A_{m-1}\pm p^{s/2^{m-2}}).
\end{equation}
 Note that
$$
A_{m-1}^2+2B_{m-1}^2=p^{s/2^{m-3}}=(p^{s/2^{m-2}})^2=(A_m^2+2B_m^2)^2
=(A_m^2-2B_m^2)^2+2\cdot(2A_mB_m)^2.
$$
Hence $A_{m-1}=\pm(A_m^2-2B_m^2)$. Since $p^{s/2^{m-2}}=A_m^2+2B_m^2\equiv 3\pmod{8}$, $B_m$ is odd, and so $A_{m-1}=A_m^2-2B_m^2$. Substituting the expressions 
for $p^{s/2^{m-2}}$ and $A_{m-1}$ into \eqref{eq12}, we find that 
\begin{align*}
\left(G(\lambda)+G(\bar\lambda)\right)^2&=-4A_m^2q^{(2^{m-2}-1)/2^{m-2}},\\
\left(G(\lambda)-G(\bar\lambda)\right)^2&=8B_m^2q^{(2^{m-2}-1)/2^{m-2}}.
\end{align*}
The last two equalities together with \eqref{eq5}, \eqref{eq6} and \eqref{eq9} imply
\begin{align}
(X-\eta_0^*)(X-\eta_{2^{m-1}}^*)=\,&
\Bigl(X+3q^{\frac 12}-\sum\limits_{r=3}^{m-1} 2^{r-1}A_r q^{\frac{2^{r-2}-1}{2^{r-1}}}\Bigr)^2\notag\\
&+2^{2(m-1)}A_m^2q^{\frac{2^{m-2}-1}{2^{m-2}}},\label{eq13}\\
(X-\eta_{2^{m-3}}^*)(X-\eta_{-2^{m-3}}^*)=\,&\Bigl(X+3q^{\frac 12}-\sum\limits_{r=3}^{m-3} 2^{r-1}A_r q^{\frac{2^{r-2}-1}{2^{r-1}}}+2^{m-3}A_{m-2}q^{\frac{2^{m-4}-1}{2^{m-3}}}\Bigr)^2\notag\\
&+2^{2(m-1)}B_m^2q^{\frac{2^{m-2}-1}{2^{m-2}}}\qquad\text{if $m\ge 5$,}\label{eq14}\\
(X-\eta_{2^{m-3}}^*)(X-\eta_{-2^{m-3}}^*)=\,&(X-q^{\frac 12})^2+64B_4^2 q^{\frac 34}\qquad\text{if $m=4$.}\label{eq15}
\end{align}
Clearly, the  quadratic polynomials on the right sides of \eqref{eq13}~--~\eqref{eq15} are irreducible over the rationals.

Putting \eqref{eq7}, \eqref{eq9}~--~\eqref{eq11}, \eqref{eq13}~--~\eqref{eq15} together and appealing to Lemma~\ref{l13}, we deduce parts~(b) and (c). This completes the proof.
\end{proof}

\begin{remark}
\label{r}
{\rm The result of Gurak~\cite[Proposition~3.3(iii)]{G3} can be reformulated in terms of $A_3$ and $B_3$. Namely, $P_8^*(X)$   has the following factorization into irreducible polynomials over the rationals:
\begin{align*}
P_8^*(X)=\,& (X-q^{1/2})^2 (X-q^{1/2}+4B_3 q^{1/4})^2 (X-q^{1/2}-4B_3 q^{1/4})^2 &\\
&\times (X+3q^{1/2}+4A_3 q^{1/4})(X+3q^{1/2}-4A_3 q^{1/4})&\text{if $4\mid s$,}\\
P_8^*(X)=\,&(X-3q^{1/2})^2 &\\
&\times\left((X+q^{1/2})^2+16A_3^2 q^{1/2}\right)\left((X+q^{1/2})^2+16B_3^2 q^{1/2}\right)^2 &\text{if $2\parallel s$.}
\end{align*}
We see that Theorem~\ref{t1} is not valid for $m=3$.}
\end{remark}

\section{The Case $p\equiv 5\pmod{8}$}
\label{s4}

In this section, $p\equiv 5\pmod{8}$. As in the previous sections, $2^m\mid(q-1)$ and $\lambda$ denotes a character of order~$2^m$ on $\mathbb F_q$ such that $\lambda(\gamma)=\zeta_{2^m}$.

For $2\le r\le m-1$, define  the integers $C_r$ and $D_r$ by
\begin{gather}
p^{s/2^{r-1}}=C_r^2+D_r^2,\qquad C_r\equiv 1\pmod{4},\qquad p\nmid C_r,\label{eq16}\\
D_r \gamma^{(q-1)/4}\equiv C_r\pmod{p}.\label{eq17}
\end{gather}
If $2^{m-1}\mid s$, we extend this notation to $r=m$. It is well known that for each fixed $r$, the conditions~\eqref{eq16} and \eqref{eq17} determine $C_r$ and $D_r$ uniquely.

\begin{lemma}
\label{l16}
Let $r$ be an integer with $2^{r-1}\mid s$ and $2\le r\le m$. Then
$$
G(\lambda^{2^{m-r}})+G(\bar\lambda^{2^{m-r}})=\begin{cases}
-2C_r q^{(2^{r-1}-1)/2^r}&\text{if $2^r\mid s$,}\\
(-1)^r\cdot 2C_r q^{(2^{r-1}-1)/2^r}&\text{if $2^{r-1}\parallel s$,}
\end{cases}
$$
and
$$
G(\lambda^{2^{m-r}})-G(\bar\lambda^{2^{m-r}})=\begin{cases}
2D_r q^{(2^{r-1}-1)/2^r}i&\text{if $2^r\mid s$,}\\
(-1)^{r-1}\cdot 2D_r q^{(2^{r-1}-1)/2^r}i&\text{if $2^{r-1}\parallel s$.}
\end{cases}
$$
\end{lemma}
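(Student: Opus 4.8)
The plan is to imitate the proof of Lemma~\ref{l15}, replacing the octic reduction by a quartic one via Lemma~\ref{l11} taken with $n=2$ (the relevant value for $p\equiv 5\pmod 8$). Since $\lambda^{2^{m-r}}$ has order $2^r$ and $2^{r-1}\mid s$, Lemma~\ref{l11} shows that $\bigl(\lambda^{2^{m-r}}\bigr)^{2^{r-2}}=\lambda^{2^{m-2}}$ is the lift of some quartic character $\chi$ on $\mathbb F_{p^{s/2^{r-1}}}$ and yields an explicit expression for $G(\lambda^{2^{m-r}})$. Applying the same lemma to the order-$2^r$ character $\bar\lambda^{2^{m-r}}$, whose $2^{r-2}$-th power is the lift of $\bar\chi$, and using that the sign factor $(-1)^{s(r-1)/2^{r-1}}$ in Lemma~\ref{l11} depends only on the order of the character, I would obtain
$$
G(\lambda^{2^{m-r}})\pm G(\bar\lambda^{2^{m-r}})=q^{(2^{r-1}-1)/2^r}\,(-1)^{s(r-1)/2^{r-1}}\bigl(J(\chi)\pm J(\bar\chi)\bigr).
$$

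The next step is to pin down $\chi$ and compute $J(\chi)$. The element $g=\gamma^{(q-1)/(p^{s/2^{r-1}}-1)}$ generates $\mathbb F_{p^{s/2^{r-1}}}^*$, and by the definition of the lift $\chi(g)=\chi\bigl({\mathop{\rm N}}_{\mathbb F_q/\mathbb F_{p^{s/2^{r-1}}}}(\gamma)\bigr)=\lambda^{2^{m-2}}(\gamma)=\zeta_{2^m}^{2^{m-2}}=i$. Moreover $\gamma^{(q-1)/4}=g^{(p^{s/2^{r-1}}-1)/4}$, so \eqref{eq16} and \eqref{eq17} are precisely the conditions normalizing the quartic Jacobi sum attached to $\chi$; invoking the explicit evaluation of quartic Jacobi sums over finite fields (the quartic counterpart of \cite[Lemma~17]{B1} used in Lemma~\ref{l15}), I get $J(\chi)=-C_r+D_r i$, and hence $J(\chi)+J(\bar\chi)=-2C_r$ and $J(\chi)-J(\bar\chi)=2D_r i$.

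Finally I would evaluate the sign $(-1)^{s(r-1)/2^{r-1}}$: if $2^r\mid s$ then $s/2^{r-1}$ is even, so the exponent is even and the sign is $+1$; if $2^{r-1}\parallel s$ then $s/2^{r-1}$ is odd, so the exponent has the parity of $r-1$ and the sign is $(-1)^{r-1}$. Substituting this into the displayed identity, for each choice of $\pm$, gives the stated formulas. I expect the one genuinely delicate point to be the bookkeeping of sign conventions --- checking that the cited quartic Jacobi sum evaluation, combined with the normalization $C_r\equiv 1\pmod 4$ and the congruence \eqref{eq17}, really produces real part $-C_r$ and imaginary part $+D_r$ (and not the opposite signs), since a slip there would corrupt the $(-1)^r$ factors. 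Apart from this and the case split on $\mathop{\rm ord}_2 s$, the argument is the same routine computation as in Lemma~\ref{l15}.
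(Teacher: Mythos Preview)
Your proposal is correct and follows essentially the same route as the paper's proof, which simply says to repeat the argument of Lemma~\ref{l15} with \cite[Proposition~3]{KR} in place of \cite[Lemma~17]{B1}. The only cosmetic difference is that where you refer to ``the quartic counterpart of \cite[Lemma~17]{B1}'', the paper names the specific reference \cite[Proposition~3]{KR}; that result indeed gives $J(\chi)=-C_r+D_r i$ under the normalizations \eqref{eq16}--\eqref{eq17} and $\chi(g)=i$, so your sign bookkeeping and case split on $\mathop{\rm ord}_2 s$ are exactly what is needed.
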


\begin{proof}
The proof proceeds exactly as for Lemma~\ref{l15}, except that at the end, \cite[Proposition~3]{KR} is invoked instead of \cite[Lemma~17]{B1}.
\end{proof}

We are now ready to establish our second main result.

\begin{theorem}
\label{t2}
Let $p\equiv 5\pmod{8}$ and $m\ge 4$. Then $P_{2^m}^*(X)$   has a unique decomposition into irreducible polynomials over the rationals as follows:
\begin{itemize}
\item[\rm (a)]
if $2^m\mid s$, then
\begin{align*}
P_{2^m}^*(X)=\,& (X-q^{\frac 12}+2D_2 q^{\frac 14})^{2^{m-2}} (X-q^{\frac 12}-2D_2 q^{\frac 14})^{2^{m-2}}\\
&\times\Bigl(X+q^{\frac 12}+\sum_{r=2}^{m-1}2^{r-1}C_r q^{\frac{2^{r-1}-1}{2^r}}-2^{m-1}C_m q^{\frac{2^{m-1}-1}{2^m}}\Bigr)\\
&\times\Bigl(X+q^{\frac 12}+\sum_{r=2}^m 2^{r-1}C_r q^{\frac{2^{r-1}-1}{2^r}}\Bigr)\prod_{t=1}^{m-2}R_t(X)^{2^{m-t-2}};
\end{align*}
\item[\rm (b)]
if $2^{m-1}\parallel s$, then
\begin{align*}
P_{2^m}^*(X)=\,& (X-q^{\frac 12}+2D_2 q^{\frac 14})^{2^{m-2}} (X-q^{\frac 12}-2D_2 q^{\frac 14})^{2^{m-2}}\\
&\times\left(\Bigl(X+q^{\frac 12}+\sum_{r=2}^{m-1}2^{r-1}C_r q^{\frac{2^{r-1}-1}{2^r}}\Bigl)^2 -2^{2(m-1)}C_m^2 q^{\frac{2^{m-1}-1}{2^{m-1}}}\right)\\
&\times\Biggl(\Bigl(X+q^{\frac 12}+\sum_{r=2}^{m-2}2^{r-1}C_r q^{\frac{2^{r-1}-1}{2^r}}-2^{m-2}C_{m-1}q^{\frac{2^{m-2}-1}{2^{m-1}}}\Bigr)^2\Biggr.\\
&\hskip20pt -\Biggl.2^{2(m-1)}D_m^2 q^{\frac{2^{m-1}-1}{2^{m-1}}}\Biggr)\prod_{t=1}^{m-3}R_t(X)^{2^{m-t-2}};
\end{align*}
\item[\rm (c)]
if $2^{m-2}\parallel s$, then
\begin{align*}
P_{2^m}^*(X)=\,& (X-q^{\frac 12}+2D_2 q^{\frac 14})^{2^{m-2}} (X-q^{\frac 12}-2D_2 q^{\frac 14})^{2^{m-2}}\\
&\times\biggl(\Bigl(X+q^{\frac 12}+\sum_{r=2}^{m-3}2^{r-1}C_r q^{\frac{2^{r-1}-1}{2^r}}-2^{m-3}C_{m-2}q^{\frac{2^{m-3}-1}{2^{m-2}}}\Bigr)^2\biggr.\\
&\hskip26pt-\biggl.2^{2(m-2)}D_{m-1}^2 q^{\frac{2^{m-2}-1}{2^{m-2}}}\biggr)^2
\end{align*}
\begin{align*}
&\times\Biggl(\biggl(\Bigl(X+q^{\frac 12}+\sum_{r=2}^{m-2}2^{r-1}C_r q^{\frac{2^{r-1}-1}{2^r}}\Bigr)^2+2^{2(m-2)}C_{m-1}^2 q^{\frac{2^{m-2}-1}{2^{m-2}}}+2^{2m-3}q\biggr)^2\Biggr.\\
&\hskip26pt -2^{2(m-1)}C_{m-1}^2 q^{\frac{2^{m-2}-1}{2^{m-2}}}\Biggl.\Bigl(X+(2^{m-2}+1)q^{\frac 12}+\sum_{r=2}^{m-2}2^{r-1}C_r q^{\frac{2^{r-1}-1}{2^r}}\Bigr)^2\Biggr)\\
&\times\prod_{t=1}^{m-4}R_t(X)^{2^{m-t-2}}.
\end{align*}
\end{itemize}
The integers $C_r$ and $|D_r|$ are uniquely determined by~\eqref{eq16}, and
\begin{align*}
R_t(X)=\,&\Bigl(X+q^{\frac 12}+\sum_{r=2}^t 2^{r-1}C_r q^{\frac{2^{r-1}-1}{2^r}}-2^t C_{t+1} q^{\frac{2^t-1}{2^{t+1}}}+2^{t+1} D_{t+2} q^{\frac{2^{t+1}-1}{2^{t+2}}}\Bigr)\\
&\times\Bigl(X+q^{\frac 12}+\sum_{r=2}^t 2^{r-1}C_r q^{\frac{2^{r-1}-1}{2^r}}-2^t C_{t+1} q^{\frac{2^t-1}{2^{t+1}}}-2^{t+1} D_{t+2} q^{\frac{2^{t+1}-1}{2^{t+2}}}\Bigr).
\end{align*}
\end{theorem}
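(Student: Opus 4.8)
The plan is to run the argument of Theorem~\ref{t1} with the partition $p^{s/2^{r-1}}=C_r^2+D_r^2$ and Lemma~\ref{l16} in place of $p^{s/2^{r-2}}=A_r^2+2B_r^2$ and Lemma~\ref{l15}. Since $2^{m-2}\mid s$ and $m\ge4$ we have $4\mid s$, so $q^{1/4}=p^{s/4}\in\mathbb Z$, and, as $p\equiv1\pmod4$ with $s$ even, Lemma~\ref{l3} gives $G(\rho)=-q^{1/2}$. Substituting this and Lemma~\ref{l16} into Lemma~\ref{l14} produces expressions for $\eta_0^*$, $\eta_{2^{m-1}}^*$ and $\eta_{\pm2^t}^*$ in terms of the $C_r,D_r$, the powers $q^{(2^{r-1}-1)/2^r}$, and --- only when an order-$2^m$ character is involved --- the sums $G(\lambda^{\pm1})$; the purely imaginary terms $i\bigl(G(\lambda^{2^{m-t-2}})-G(\bar\lambda^{2^{m-t-2}})\bigr)$ peculiar to $p\equiv5\pmod8$ become real multiples of $q$-powers by Lemma~\ref{l16}. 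From these one reads off $\eta_{\pm1}^*=q^{1/2}\pm2D_2q^{1/4}$ and, whenever all the orders in play divide the power of $2$ dividing $s$, $(X-\eta_{2^t}^*)(X-\eta_{-2^t}^*)=R_t(X)$; these supply the factors common to all three cases.

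I would then split according to the power of $2$ dividing $s$. If $2^m\mid s$, Lemma~\ref{l16} (first alternative) evaluates every Gauss sum that occurs and each exponent $s(2^{r-1}-1)/2^r$ with $r\le m$ is an integer, so all $\eta_k^*$ are integers and Lemma~\ref{l13} gives part~(a) after rearrangement. If $2^{m-1}\parallel s$, Lemma~\ref{l16} still resolves everything, but $q^{(2^{m-1}-1)/2^m}=p^{(s/2^{m-1})(2^{m-1}-1)/2}$ has odd-half-integer exponent and is irrational; consequently $\eta_0^*,\eta_{2^{m-1}}^*$ (resp.\ $\eta_{2^{m-2}}^*,\eta_{-2^{m-2}}^*$) differ only in the sign of one term lying in $\mathbb Q\sqrt p$, so $(X-\eta_0^*)(X-\eta_{2^{m-1}}^*)$ and $(X-\eta_{2^{m-2}}^*)(X-\eta_{-2^{m-2}}^*)$ each have the shape $(\text{rational linear})^2-(\text{rational constant})$ with constant $2^{2(m-1)}C_m^2q^{(2^{m-1}-1)/2^{m-1}}$ (resp.\ the same with $D_m^2$), which equals $p$ times a perfect square and hence is not a square (as $C_m,D_m\ne0$ by~\eqref{eq16}); these quadratics are thus irreducible, the remaining periods are integers, and Lemma~\ref{l13} gives part~(b).

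The case $2^{m-2}\parallel s$ is the crux: then $2^m\parallel(q-1)$, so $\lambda(-1)=-1$, and the order-$2^m$ Gauss sums escape Lemma~\ref{l16}. As in \eqref{eq12} for Theorem~\ref{t1}, I would invoke Lemmas~\ref{l2}(a,c), \ref{l3}, \ref{l8}, \ref{l9} to obtain $G(\lambda)G(\bar\lambda)=-q$ and $G(\lambda)^2+G(\bar\lambda)^2=q^{1/2}\bigl(G(\lambda^2)+G(\bar\lambda^2)\bigr)=(-1)^{m-1}2C_{m-1}q^{(2^{m-1}-1)/2^{m-1}}$ (Lemma~\ref{l16} applying to the order-$2^{m-1}$ character), whence explicit values of $\bigl(G(\lambda)\pm G(\bar\lambda)\bigr)^2$; the part played in Theorem~\ref{t1} by $A_{m-1}=A_m^2-2B_m^2$ is here played by $p^{s/2^{m-2}}=C_{m-1}^2+D_{m-1}^2$. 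Then $\eta_{\pm2^{m-3}}^*$ form a conjugate pair yielding the irreducible rational quadratic whose constant involves $D_{m-1}^2$ exhibited in part~(c), which occurs squared by Lemma~\ref{l13}, while the four periods $\eta_0^*,\eta_{2^{m-1}}^*,\eta_{2^{m-2}}^*,\eta_{-2^{m-2}}^*$ are conjugate over $\mathbb Q$. Writing $Y=X+q^{1/2}+\sum_{r=2}^{m-2}2^{r-1}C_rq^{(2^{r-1}-1)/2^r}$ and decomposing each of these four periods into its rational part, its $\mathbb Q\sqrt p$-part, and its (purely imaginary) Gauss-sum part, the product $\prod(X-\cdot)$ over them factors over $\mathbb Q(\sqrt p)$ into two $\sqrt p$-conjugate quadratics and, after taking the norm down to $\mathbb Q$, collapses to the stated biquadratic $(Y^2+\cdots)^2-(\cdots)(Y+2^{m-2}q^{1/2})^2$; it is irreducible over $\mathbb Q$ because its two quadratic factors over $\mathbb Q(\sqrt p)$ have a linear coefficient which is a nonzero rational multiple of the irrational $q^{(2^{m-2}-1)/2^{m-1}}$. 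Assembling everything through Lemma~\ref{l13} gives part~(c). I expect the main obstacle to be precisely this final step --- identifying the four-term orbit, performing the $\mathbb Q(\sqrt p)/\mathbb Q$ norm, and matching the answer to the prescribed grouping --- together with keeping straight, in each of the three cases, which powers of $q$ are rational.
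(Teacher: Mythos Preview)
Your plan is essentially that of the paper: evaluate $G(\rho)$ by Lemma~\ref{l3}, feed Lemma~\ref{l16} into Lemma~\ref{l14} to obtain the periods, and assemble via Lemma~\ref{l13}; in case~(c) compute $\bigl(G(\lambda)\pm G(\bar\lambda)\bigr)^2$ exactly as you describe. Cases~(a) and~(b) go through as you say, and your discriminant argument for irreducibility in~(b) is a clean substitute for the paper's appeal to algebraic conjugacy.

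There is, however, a genuine gap in your irreducibility argument for the quartic in case~(c). You assert that the product of the two $\sqrt p$-conjugate quadratics is irreducible over~$\mathbb Q$ ``because its two quadratic factors over $\mathbb Q(\sqrt p)$ have a linear coefficient which is a nonzero rational multiple of the irrational $q^{(2^{m-2}-1)/2^{m-1}}$.'' That only shows the two quadratics are not in~$\mathbb Q[X]$; it does not by itself prevent the quartic from having a \emph{different} rational quadratic or linear factorization. (Toy counterexample: $g(X)=(X-1)(X-\sqrt p)$ has irrational linear coefficient, yet $g\bar g=(X-1)^2(X^2-p)$ is reducible over~$\mathbb Q$.) What is missing is that each of your two quadratics is \emph{irreducible} over $\mathbb Q(\sqrt p)$ --- equivalently, that its roots are non-real. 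The paper secures this by the inequality $|C_{m-1}|<q^{1/2^{m-1}}$ (immediate from $C_{m-1}^2+D_{m-1}^2=q^{1/2^{m-2}}$ with $D_{m-1}\ne 0$), which makes the constant term of each quadratic positive and hence the quadratic irreducible over~$\mathbb R$; since $\mathbb R[X]$ is a UFD and neither quadratic lies in~$\mathbb Q[X]$, the quartic is irreducible over~$\mathbb Q$. Once you add that one line, your argument is complete and matches the paper's.
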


\begin{proof}
As $s$ is even, Lemma~\ref{l3} yields $G(\rho)=-q^{1/2}$. Then, applying Lemmas~\ref{l14} and \ref{l16}, we obtain 
\begin{align}
\eta_0^*=\,&-q^{\frac 12}-\sum_{r=2}^{m-2}2^{r-1}C_r q^{\frac{2^{r-1}-1}{2^r}}+2^{m-3}\left(G(\lambda^2)+G(\bar\lambda^2)\right)\notag\\
&+2^{m-2}\left(G(\lambda)+G(\bar\lambda)\right),\label{eq18}\\
\eta_{2^{m-1}}^*=\,&-q^{\frac 12}-\sum_{r=2}^{m-2}2^{r-1}C_r q^{\frac{2^{r-1}-1}{2^r}}+2^{m-3}\left(G(\lambda^2)+G(\bar\lambda^2)\right)\notag\\
&-2^{m-2}\left(G(\lambda)+G(\bar\lambda)\right),\label{eq19}\\
\eta_{\pm 2^{m-2}}^*=\,&-q^{\frac 12}-\sum_{r=2}^{m-2}2^{r-1}C_r q^{\frac{2^{r-1}-1}{2^r}}-2^{m-3}\left(G(\lambda^2)+G(\bar\lambda^2)\right)\notag\\
&\mp 2^{m-2}i\left(G(\lambda)-G(\bar\lambda)\right),\label{eq20}\\
\eta_{\pm 2^{m-3}}^*=\,&-q^{\frac 12}-\sum_{r=2}^{m-3}2^{r-1}C_r q^{\frac{2^{r-1}-1}{2^r}}+2^{m-3}C_{m-2}q^{\frac{2^{m-3}-1}{2^{m-2}}}\notag\\
&\mp 2^{m-3}i\left(G(\lambda^2)-G(\bar\lambda^2)\right),\label{eq21}\\
\eta_{\pm 1}^*=\,&q^{\frac 12}\pm 2D_2 q^{\frac 14},\label{eq22}
\end{align}
and, for $1\le t\le m-4$,
\begin{equation}
\label{eq23}
\eta_{\pm 2^t}^*=-q^{\frac 12}-\sum_{r=2}^t 2^{r-1}C_r q^{\frac{2^{r-1}-1}{2^r}}+2^t C_{t+1} q^{\frac{2^t-1}{2^{t+1}}}\pm 2^{t+1} D_{t+2} q^{\frac{2^{t+1}-1}{2^{t+2}}}.
\end{equation}

First suppose that $2^m\mid s$. By combining \eqref{eq18}~--~\eqref{eq23} with Lemma~\ref{l16}, we find the values of the cyclotomic periods, which are all integers. Now part~(a) follows from Lemma~\ref{l13}.

Next suppose that $2^{m-1}\parallel s$.  Using \eqref{eq18}~--~\eqref{eq23} and Lemma~\ref{l16} again, we find the values of the cyclotomic periods. We observe that $\eta_0^*$ and $\eta_{2^{m-1}}^*$ as well as $\eta_{2^{m-2}}^*$ and $\eta_{-2^{m-2}}^*$ are algebraic conjugates of degree~2 over the rationals, and the remaining cyclotomic periods are integers. Therefore the polynomials
$$
(X-\eta_0^*)(X-\eta_{2^{m-1}}^*)=\Bigl(X+q^{\frac 12}+\sum_{r=2}^{m-1}2^{r-1}C_r q^{\frac{2^{r-1}-1}{2^r}}\Bigr)^2-2^{2(m-1)}C_m^2 q^{\frac{2^{m-1}-1}{2^{m-1}}}
$$
and
\begin{align*}
(X-\eta_{2^{m-2}}^*)(X-\eta_{-2^{m-2}}^*)=\,&\Bigl(X+q^{\frac 12}+\sum_{r=2}^{m-2}2^{r-1}C_r q^{\frac{2^{r-1}-1}{2^r}}-2^{m-2}C_{m-1}q^{\frac{2^{m-2}-1}{2^{m-1}}}\Bigr)^2\\
&-2^{2(m-1)}D_m^2 q^{\frac{2^{m-1}-1}{2^{m-1}}}
\end{align*}
are irreducible over the rationals. Part~(b) now follows in view of  Lemma~\ref{l13}.

Finally, suppose that $2^{m-2}\parallel s$. Making use of \eqref{eq18}~--~\eqref{eq20} and Lemma~\ref{l16}, we obtain
\begin{align}
\eta_0^*=\,&-q^{\frac 12}-\sum_{r=2}^{m-2}2^{r-1}C_r q^{\frac{2^{r-1}-1}{2^r}}-(-1)^m\cdot 2^{m-2}C_{m-1}q^{\frac{2^{m-2}-1}{2^{m-1}}}\notag\\
&+2^{m-2}\left(G(\lambda)+G(\bar\lambda)\right),\label{eq24}\\
\eta_{2^{m-1}}^*=\,&-q^{\frac 12}-\sum_{r=2}^{m-2}2^{r-1}C_r q^{\frac{2^{r-1}-1}{2^r}}-(-1)^m\cdot 2^{m-2}C_{m-1}q^{\frac{2^{m-2}-1}{2^{m-1}}}\notag\\
&-2^{m-2}\left(G(\lambda)+G(\bar\lambda)\right),\label{eq25}\\
\eta_{\pm 2^{m-2}}^*=\,&-q^{\frac 12}-\sum_{r=2}^{m-2}2^{r-1}C_r q^{\frac{2^{r-1}-1}{2^r}}+(-1)^m\cdot 2^{m-2}C_{m-1}q^{\frac{2^{m-2}-1}{2^{m-1}}}\notag\\
&\mp 2^{m-2}i\left(G(\lambda)-G(\bar\lambda)\right).\label{eq26}
\end{align}
By employing the same type of argument as in the proof of Theorem~\ref{t1}, we see that
$$
\left(G(\lambda)\pm G(\bar\lambda)\right)^2=\mp\, 2q^{(2^{m-1}-1)/2^{m-1}}\left(q^{1/2^{m-1}}\pm (-1)^m\cdot C_{m-1}\right).
$$
Combining this with \eqref{eq24}~--~\eqref{eq26}, we conclude that
\begin{align*}
(X-\eta_0^*)(X-&\eta_{2^{m-1}}^*)\\
=\,&\Bigl(X+q^{\frac 12}+\sum_{r=2}^{m-2}2^{r-1}C_r q^{\frac{2^{r-1}-1}{2^r}}+(-1)^m\cdot 2^{m-2}C_{m-1}q^{\frac{2^{m-2}-1}{2^{m-1}}}\Bigr)^2\\
&+2^{2m-3}q^{\frac{2^{m-1}-1}{2^{m-1}}}\left(q^{\frac 1{2^{m-1}}}+(-1)^m\cdot C_{m-1}\right)
\end{align*}
and
\begin{align*}
(X-\eta_{2^{m-2}}^*)(X-&\eta_{-2^{m-2}}^*)\\
=\,&\Bigl(X+q^{\frac 12}+\sum_{r=2}^{m-2}2^{r-1}C_r q^{\frac{2^{r-1}-1}{2^r}}-(-1)^m\cdot 2^{m-2}C_{m-1}q^{\frac{2^{m-2}-1}{2^{m-1}}}\Bigr)^2\\
&+2^{2m-3}q^{\frac{2^{m-1}-1}{2^{m-1}}}\left(q^{\frac 1{2^{m-1}}}-(-1)^m\cdot C_{m-1}\right).
\end{align*}
 Since $q^{1/2^{m-2}}=p^{s/2^{m-2}}=C_{m-1}^2+D_{m-1}^2$, we have $q^{1/2^{m-1}}>|C_{m-1}|$. This means that the polynomials $(X-\eta_0^*)(X-\eta_{2^{m-1}}^*)$ and $(X-\eta_{2^{m-2}}^*)(X-\eta_{-2^{m-2}}^*)$ are irreducible over the reals. Furthermore, since $2^{m-2}\parallel s$, the polynomials $(X-\eta_0^*)(X-\eta_{2^{m-1}}^*)$ and $(X-\eta_{2^{m-2}}^*)(X-\eta_{-2^{m-2}}^*)$ belong to $\mathbb R[X]\setminus\mathbb Q[X]$. Since $\mathbb R[X]$ is a unique factorization domain, it follows that the polynomial
\begin{align*}
(X-&\eta_0^*)(X-\eta_{2^{m-1}}^*)(X-\eta_{2^{m-2}}^*)(X-\eta_{-2^{m-2}}^*)\\
=\,&\biggl(\Bigl(X+q^{\frac 12}+\sum_{r=2}^{m-2}2^{r-1}C_r q^{\frac{2^{r-1}-1}{2^r}}\Bigr)^2+2^{2(m-2)}C_{m-1}^2 q^{\frac{2^{m-2}-1}{2^{m-2}}}+2^{2m-3}q\biggr)^2\\
&-2^{2(m-1)}C_{m-1}^2 q^{\frac{2^{m-2}-1}{2^{m-2}}}\Bigl(X+(2^{m-2}+1)q^{\frac 12}+\sum_{r=2}^{m-2}2^{r-1}C_r q^{\frac{2^{r-1}-1}{2^r}}\Bigr)^2
\end{align*}
is irreducible over the rationals. Further, by Lemma~\ref{l16} and \eqref{eq21},
$$
\eta_{\pm 2^{m-3}}^*=-q^{\frac 12}-\sum_{r=2}^{m-3}2^{r-1}C_r q^{\frac{2^{r-1}-1}{2^r}}+ 2^{m-3}C_{m-2}q^{\frac{2^{m-3}-1}{2^{m-2}}}\pm (-1)^m\cdot 2^{m-2}D_{m-1}q^{\frac{2^{m-2}-1}{2^{m-1}}},
$$
and so $\eta_{2^{m-3}}^*$ and $\eta_{-2^{m-3}}^*$ are algebraic conjugates of degree~2 over the rationals. Hence, the polynomial
\begin{align*}
(X-\eta_{2^{m-3}}^*)(X-\eta_{-2^{m-3}}^*)=\,&\Bigl(X+q^{\frac 12}+\sum_{r=2}^{m-3}2^{r-1}C_r q^{\frac{2^{r-1}-1}{2^r}}-2^{m-3}C_{m-2}q^{\frac{2^{m-3}-1}{2^{m-2}}}\Bigr)^2\\
&-2^{2(m-2)}D_{m-1}^2 q^{\frac{2^{m-2}-1}{2^{m-2}}}
\end{align*}
is irreducible over the rationals. The remaining cyclotomic periods $\eta_{\pm 2^t}^*$, $0\le t\le m-4$, are integers, in view of \eqref{eq22} and \eqref{eq23}. Now Part~(c) follows by appealing to Lemma~\ref{l13}. This concludes the proof.
\end{proof}

\begin{remark}
{\rm Myerson has shown \cite[Theorem~17]{M} that $P_4^*(X)$ is irreducible if $2\nmid s$,
\begin{align*}
P_4^*(X)=\,& (X+q^{1/2}+2C_2 q^{1/4})(X+q^{1/2}-2C_2 q^{1/4})&\\
&\times(X-q^{1/2}+2D_2 q^{1/4}) (X-q^{1/2}-2D_2 q^{1/4})&\text{if $4\mid s$,}\\
\intertext{and, with a slight modification,}
P_4^*(X)=\,&\left((X+q^{1/2})^2-4C_2^2q^{1/2}\right)\left((X-q^{1/2})^2-4D_2^2q^{1/2}\right)&\text{if $2\parallel s$,}
\end{align*}
where in the latter case the quadratic polynomials are irreducible over the rationals. Furthermore, the result of Gurak~\cite[Proposition~3.3(ii)]{G3} can be reformulated in terms of $C_2$, $D_2$, $C_3$ and $D_3$. Namely, $P_8^*(X)$   has the following factorization into irreducible polynomials over the rationals:
\begin{align*}
P_8^*(X)=\,& (X-q^{1/2}+2D_2 q^{1/4})^2 (X-q^{1/2}-2D_2 q^{1/4})^2\\
&\times(X+q^{1/2}+2C_2 q^{1/4}+4C_3 q^{3/8})(X+q^{1/2}+2C_2 q^{1/4}-4C_3 q^{3/8})&\\
&\times(X+q^{1/2}-2C_2 q^{1/4}+4D_3 q^{3/8})(X+q^{1/2}-2C_2 q^{1/4}-4D_3 q^{3/8})&\!\text{if $8\mid s$,}\\
P_8^*(X)=\,& (X-q^{1/2}+2D_2 q^{1/4})^2 (X-q^{1/2}-2D_2 q^{1/4})^2&\\
&\times\left((X+q^{1/2}+2C_2 q^{1/4})^2-16C_3^2 q^{3/4}\right)&\\
&\times\left((X+q^{1/2}-2C_2 q^{1/4})^2-16D_3^2 q^{3/4}\right)&\!\text{if $4\parallel s$,}\\
P_8^*(X)=\,&\left((X-q^{1/2})^2-4D_2^2 q^{1/2}\right)^2&\\
&\times\left(\left((X+q^{1/2})^2+4C_2^2 q^{1/2}+8q\right)^2-16C_2^2 q^{1/2}(X+3q^{1/2})^2\right)&\!\text{if $2\parallel s$.}
\end{align*}
Thus part~(a) of Theorem~\ref{t2} remains valid for $m=2$ and $m=3$. Moreover, for $m=3$, part~(b) of Theorem~\ref{t2} is still valid (cf. Remark~\ref{r}).}
\end{remark}


\begin{thebibliography}{00}
\bibitem{B1}
I.~Baoulina, Generalizations of the Markoff-Hurwitz equations over finite fields, {\it J.~Number Theory} {\bf 118}~(2006) 31--52.

\bibitem{B2}
I.~Baoulina, On the number of solutions to the equation $(x_1+\cdots +x_n)^2=ax_1\cdots x_n$ in a finite field, {\it Int. J. Number Theory} {\bf 4}~(2008) 797--817.

\bibitem{BM}
L.~D.~Baumert and J.~Mykkeltveit, Weight distributions of some irreducible cyclic codes, {\it DSN Progr. Rep.} {\bf 16}~(1973) 128--131.

\bibitem{BEW}
B.~C.~Berndt, R.~J.~Evans and K.~S.~Williams, {\it Gauss and Jacobi Sums}
(Wiley-Interscience, New York, 1998).

\bibitem{G1}
S.~Gurak, Factors of period polynomials for finite fields, I, in {\it The Rademacher Legacy to Mathematics}, eds. G.~E.~Andrews, D.~M.~Bressoud and L.~A.~Parson, Contemp. Math., Vol.~166 (American Mathematical Society, 1994), pp.~309--333.

\bibitem{G2}
S.~Gurak, Period polynomials for $\mathbb F_{p^2}$ of fixed small degree, in {\it Finite Fields and Applications}, eds. D.~Jungnickel and H.~Niederreiter (Springer Berlin Heidelberg, 2000), pp.~196--207.

\bibitem{G3}
S.~J.~Gurak, Period polynomials for $\mathbb F_q$ of fixed small degree, in {\it Number Theory}, eds. H.~Kisilevsky and E.~Z.~Goren, CRM
Proc. and Lect. Notes, Vol.~36 (American Mathematical Society, 2004), pp.~127--145.

\bibitem{KR}
S.~A.~Katre and A.~R.~Rajwade, Resolution of the sign ambiguity in
the determination of the cyclotomic numbers of order $4$ and the
corresponding Jacobsthal sum, {\it Math. Scand.} {\bf 60}~(1987) 52--62.

\bibitem{LN}
R.~Lidl and H.~Niederreiter, {\it Finite Fields} (Addison-Wesley, Reading, MA, 1983).

\bibitem{M}
G.~Myerson, Period polynomials and Gauss sums for finite fields, {\it Acta Arith.} {\bf 39}~(1981) 251--264.


\end{thebibliography}
\end{document}